\documentclass{article}
\usepackage{graphicx} 
\usepackage[utf8]{inputenc}
\usepackage{enumitem}
\usepackage{varwidth}
\usepackage{soul}
\usepackage{cancel}
\usepackage{tasks}
\usepackage[italicdiff]{physics}
\usepackage{amsmath}
\usepackage{amssymb}
\usepackage{algpseudocode}
\usepackage[linesnumbered,ruled,vlined]{algorithm2e}
\usepackage{enumitem}
\usepackage{subcaption}
\usepackage[top=2.5cm, bottom=2.5cm, left=4cm, right=4cm]{geometry}
\usepackage{graphicx}
\usepackage{amsthm} 
\newtheorem{theorem}{Theorem}
\newtheorem{prop}[theorem]{Proposition}

\newtheorem{remark}[theorem]{Remark}

\newenvironment{notation}
{\par\noindent\textbf{Notations:}\quad}  
  {\par}   
\newtheorem{assumption}{Assumption}

\usepackage{graphicx}

\usepackage[colorinlistoftodos]{todonotes}

\usepackage[ruled,vlined]{algorithm2e}

\newtheorem{lemma}[theorem]{Lemma}

\providecommand{\keywords}[1]
{
  \small	
  \textbf{\textit{Keywords---}} #1
}
\providecommand{\classification}[1]
{
  \small	
  \textbf{{AMS subject classifications--}} #1
}

\title{Adaptive Iterative Numerical Homogenization  for Quasilinear Nonmonotone Elliptic PDE}\author{Maher Khrais\footnotemark[1] \and
 Barbara Verfürth\footnotemark[1]}  
\date{}

\begin{document}

\maketitle
\renewcommand{\thefootnote}{\fnsymbol{footnote}}
\footnotetext[1]{Institut für Numerische Simulation, Universität Bonn, Friedrich-Hirzebruch-Allee 7, D-53115 Bonn, Germany}
\renewcommand{\thefootnote}{\arabic{footnote}}
 \begin{abstract}
    We propose and analyze an adaptive iterative numerical homogenization  method to approximate the solution of a class of quasilinear nonmonotone elliptic problems  that is of multiscale nature. The method  is based on  the technique of the Localized Orthogonal Decomposition (LOD) applied to the linear problems in each step of a Kačanov iteration.  In this approach,  the multiscale basis is   recomputed adaptively in each iteration and a linear problem is solved with this updated multiscale space, where in both steps the nonlinearity is evaluated using the approximation of the previous iteration.  As a key component of the proposed approach, we present a locally computable error indicator,  which at each iteration identifies the basis functions requiring updates, while previously computed basis functions are retained whenever possible. We provide a priori error estimates and show convergence of the method, requiring only higher integrability of the right-hand side, but no higher differentiability of the solution itself. Furthermore,  we   discuss how to adapt the proposed  adaptive iterative LOD in the context of Newton's method as iteration scheme. Numerical experiments illustrate the theory and validate the applicability of the proposed method.
\end{abstract}

\keywords{nonmonotone quasilinear problem; multiscale method; a priori error estimate; LOD; iterative scheme.} \\

\classification{65N30, 65N15, 35J60, 65J15, 65N12 }

\section{Introduction}\label{SectionIntro}
Over the past decades, manufactured composite materials  with tailored properties have attracted substantial growing interest   due to their multifunctionality and  wide range of applications in several fields of sciences.  Among these applications are, for example,  composite textile,  soft  and  reinforced composed  biological material,  and carbon-fiber, see \cite{bilogocalmethod, softbio, carbonfiber}. Heterogeneous materials 
can exhibit  nonlinear deformation or other  physical behaviors that are nonlinear when subject to high temperatures, intensities,  or forces.   This nonlinear physical response of  heterogeneous materials  necessitates accurate nonlinear multiscale modeling \cite{material}.
    In this work, we present, analyze, and implement an iterative numerical homogenization technique applied to a class of multiscale nonmonotone quasilinear elliptic PDEs of the following form
    \begin{equation}\label{eq:1}
        - \nabla \cdot(\alpha(x, u)\nabla{}u) = f \quad\text{in } \Omega. 
    \end{equation}
The domain  $\Omega \subset \mathbb{R}^d $ is an open bounded domain  $\text{with}\;d\leq 3$. 
Boundary conditions and further assumptions  will be made precise below.  Nonlinear PDEs of of the form  \eqref{eq:1} describe a range of physical processes,  such as  stationary Richards equation that
models the flow of the ground water in  unsaturated porous media,  or heat conductivity of  composite materials, see \cite{RicharEq, heatConduction}. 

Several authors have investigated  the well-posedness of problem \eqref{eq:1} in the context of weak solutions.    The uniqueness of the weak solution $u$ was established in \cite{Douglas}, and subsequently  derived as a weak limit of  Galerkin approximations in \cite{Unique}.  In addition, several numerical techniques have been considered to solve nonmonotone  problems such as \eqref{eq:1}. However, these studies mainly rely on higher regularity assumptions for the weak solution beyond $H^1_0(\Omega)$, or the assumption of a globally  fine mesh size; see, for example \cite{reg2, reg1, reg3}.  Nonetheless,   the uniqueness of the FEM solution in one and two dimensions without fine global mesh size assumption has been established in \cite{SaraPollock}.   We also refer to \cite{uniqandnotunique,DouglasGalerkin}  and the references therein for further arguments and details on the existence and uniqueness of  the approximate solutions.   In addition to difficulties associated with the nonlinearity, our problem involves a spatially heterogeneous coefficient.  Hence, to obtain a reliable approximation using classical numerical approaches, and to recover the optimal convergence rate w.r.t.\ the $H^1$-norm, the microstructural  features need to be resolved by the mesh size. In this case, numerical simulation would incur significant computational cost.  This has triggered growing interest in development of numerical homogenization methods.  

From a general perspective,  numerical homogenization techniques aim to solve a  global  macroscopic problem using modified multiscale basis functions that incorporate  the underlying fine-scale behaviors, and  that are obtained through microscopic solvers on local subdomains. A broad range of  numerical multiscale methods have been developed recently to address linear multiscale problems, in particular linear elliptic PDEs. These methods  include, for example, the heterogeneous multiscale method (HMM) \cite{HMM}, the multiscale finite element method (MsFEM) \cite{MsFEMLINEAR}, the generalized multiscale method finite element method (GMsFEM) \cite{GMSFEM}, the variational multiscale method (VMS) \cite{VMSM}, rough polyharmonic splines (RPS) \cite{RPS}, and the local orthogonal decomposition method (LOD) \cite{LOD-Linear}.  Certain numerical homogenization  techniques have been adapted to nonlinear multiscale problems,  with or without relying on a monotonicity assumption. The HMM has been formulated and analyzed for the nonmonotone nonlinear elliptic problem \eqref{eq:1}  in \cite{HMM_quasiliner, HMS-ofNonmonotne}. A component-based parametric model order reduction (CB-pMOR) technique is proposed for the parametrized nonmonotone   problem in~\cite{CB_pMOR}. The  MsFEM is considered for a monotone nonlinear PDE,     and the corresponding  convergence rate is  established in \cite{MSFEM}. Furthermore, the GMsFEM is implemented and analyzed to solve nonlinear elliptic problems in~\cite{NonlinearGMS,CEM-GMsFEM}. An iterative numerical homogenization in the context of generalized RPS with  monotone nonlinearity is considered in \cite{GRPS}. In such an iterative approach, the nonlinear problem is solved through a sequence of  approximate linear problems with updated coarse space. Using the so-called quasi-norm, the approach achieves an exponential energy convergence  that is independent of the heterogeneity. 

The iterative numerical homogenization considered in our work  is based on the LOD technique, which  was first presented and  successfully   applied to a range of (linear) problems, see \cite{ NumericalHomogenizationActaNumerica, LODFIRst} for overviews. Generally,  in the LOD method, we enrich  the coarse FE basis with locally computed  fine-scale corrections that resolve the details of the underlying microstructure,   resulting in a new modified low-dimensional function space with good approximation properties.  The fine-scale corrections are computed in a localized fashion on small patches of coarse mesh elements. Concerning nonlinear problems, the LOD has been investigated in the case of  monotone and nonmonotone quasilinear elliptic PDEs in \cite{ khraisverfuerth2025,Barbara}, and nonlinear Helmholtz problems \cite{MAir&barbara}.  In the approach of \cite{ khraisverfuerth2025,Barbara},  a multiscale space is constructed  only once and then the global nonlinear discrete problem is (iteratively) solved.   Thus, the construction  of the multiscale space depends on a suitable choice of linearization point.  The numerical experiments in \cite{ khraisverfuerth2025} reveal that, on the one hand, an inappropriate  linearization point could destroy the first-order convergence w.r.t.\ the mesh size, but that, on the other hand, using the first (inappropriate)  LOD  solution as a new linearization point, the first-order convergence is recovered. This motivates further investigation into the use of the iterative LOD as a potentially effective approach.   The idea is to iteratively update the  multiscale space in order to address the challenge of selecting a suitable linearization point.  
 
In this article,  we propose  an  adaptive iterative LOD technique for the nonlinear nonmonotone elliptic problem \eqref{eq:1}, which is inspired by the ideas presented for a nonlinear Helmholtz problem in \cite{MAir&barbara}.   In  the iterative LOD,  we  construct a new multiscale space and  solve a linear problem in each iteration based on a fixed point method using an approximation of the previous solution to ``freeze'' the nonlinearity. 
Different from \cite{khraisverfuerth2025, Barbara}, in the algorithms described in this work, both the correction and the  linear discrete problem are computed starting via the same initial point $\phi$ and the same  iterative method.  As the iterative LOD  typically requires a repeated number of multiscale space constructions entailing rather high computational cost, we apply an adaptive update strategy combining ideas from \cite{adaptive2, adaptive1} and \cite{MAir&barbara}. 
Furthermore, we  present a rigorous error analysis. 
  For sufficiently small tolerance for the adaptive updates, we show an a priori error estimate which is of optimal order in the mesh size without any dependence on the spatial variations of $\alpha$. In contrast to \cite{MAir&barbara}, the nonlinearity affects a higher-order term w.r.t.\ the derivatives, which requires substantially new arguments. A major challenge is that the weak solution of \eqref{eq:1} is of rather low regularity for heterogeneous coefficients. As a remedy, we assume higher integrability of the right-hand side $f$ and/or a limited contrast of the diffusion coefficient as suggested in \cite{LP}, resulting in a slightly improved regularity of the weak solution.  Via fixed-point arguments, we show convergence of the iterative LOD, requiring a certain smallness assumption on the data of the problem.
 
The article is structured as follows.  In  Section \ref{Sec: setting}, we introduce the model problem and present both the iterative Kačanov method and the finite element discretization. In Section \ref{Section :Ortho}, we introduce our iterative multiscale approach,  in which we construct a multiscale space and  solve a linear problem in each step. In Section \ref{Section: error1-2-3D}, the essential parts of our error analysis are presented, together with the corresponding regularity requirements. 
Further steps to make our approach practically feasible, such as the localization and adaptive update of the multiscale basis, are presented and analyzed in Section \ref{sec: adaptive}. In Section \ref{Sec:NEWTONAlgorithm},  we discuss how to apply the adaptive LOD in the framework of Newton iterations. Several numerical experiments are presented in Section \ref{sec: SectionExperiment} which demonstrate the effectiveness and applicability of the methods developed in the previous sections.  

\begin{notation}
    Throughout the article, we use standard notation on Lebesgue and Sobolev spaces and their norms. For a given subdomain $\Omega_1 \subseteq \Omega$, let $|\cdot|_{1,\Omega_1}$, $\|\cdot \|_{1,\Omega_1}$, $\|\cdot\|_{\infty}$,  $\|\cdot\|_{0,\Omega_1}$, and $\|\cdot\|_{L^p,\Omega_1}$ denote the $H^1(\Omega_1)$-semi-norm, the standard $H^1(\Omega_1)$-norm, the $L^\infty$-norm,  the standard $L^2(\Omega_1)$-norm, and the standard $L^p(\Omega_1)$-norm, respectively. The scalar product $(\cdot, \cdot)_{\Omega_1}$ is the $L^2$ inner product on the subdomain $\Omega_1$. We will omit the subscript $\Omega_1$ if it is the full domain, i.e., $\Omega_1=\Omega$. We write $a \lesssim b$ if there is a generic constant $C$ (independent from the discretization and multiscale parameters) such that $a\leq C b$. To simplify the presentation of some details and proofs,  we omit the spatial term associated with  the coefficient $\alpha(x, \cdot)$ or $\alpha_u(x, \cdot)$ whenever it is clear from the context. 
    \end{notation}

\section{Setting}\label{Sec: setting}
In this section, we introduce the model problem for our article and its main assumptions. To solve the nonlinear problem in practice, we also present an iterative scheme and  discuss briefly  the finite element method for it. In particular, we  underline the limitations of the FEM  in our multiscale context. 

\subsection{Problem formulation}

The model of interest is the following nonlinear nonmonotone PDE
\begin{equation}\label{PDEProblem}
       \begin{split}
           -\nabla\cdot{(\alpha(x,u)\nabla{u})}&=f \;\;\text{in}\;\;\Omega,  \\ u&=0 \;\; \text{on} \;\; \partial \Omega.
        \end{split}
\end{equation}
We consider a bounded Lipschitz domain $\Omega \subset \mathbb{R}^d,  d \in \{1,2,3\}$     with Lipschitz  boundary $\partial \Omega$. 
For now,  we focus our presentation on the case $ \;  d \in \{1,2\}$. Later, we will provide further explanations and outline the  assumptions for the $\operatorname{3D}$ domain in the relevant section.  Within our setting, we assume that $\alpha(\cdot,s) \in L^\infty (\Omega, \mathbb{R})$ is a strongly heterogeneous and highly varying coefficient with respect to the spatial argument $x$. More precisely, we make the following assumptions on the nonlinearity, which also guarantee the well-posedness of problem~\eqref{PDEProblem}.  
\begin{assumption}\label{assum1} Suppose that
\begin{itemize}
\item     $\alpha(x,s)$ is uniformly Lipschitz continuous w.r.t.\ the second argument, i.e., there exists  $L_c >0$ such that 
    \begin{equation}\label{LipCon}
       \begin{split}
       \abs{\alpha(x, s_1)-\alpha(x,s_2)}\leq L_c \abs{s_1-s_2},   \quad\forall x \in \Omega, \quad s_1, s_2 \in \mathbb{R}.
       \end{split}
    \end{equation}
  
 \item  $\alpha(x,s)$ satisfies the uniform ellipticity and boundedness conditions, i.e., there exist $\lambda >0$, and $\Lambda>0$ such that
    \begin{equation}\label{ellip}
                  {\alpha(x, s)\psi\cdot\psi} \geq \lambda \abs{\psi}, \quad \text{and} \quad |\alpha(x,s)\psi|\leq \Lambda _1 \abs{\psi} \quad \forall  x \in \Omega, \;s \in \mathbb{R}, \;  \psi \in \mathbb{R}^d.
    \end{equation}
    \end{itemize}
\end{assumption}
In addition,  we consider that $f \in L^2(\Omega)$. Later, we will assume  higher integrability on $f$ in the relevant sections. For $u, v \in H^1_0(\Omega)$ and fixed $z \in H^1_0(\Omega)$, we define  
\[\mathcal{A}(z;u,v) :=\int_\Omega\alpha(x,z)\nabla u\cdot\nabla v\, dx, \quad \text{ and } F(v):=\int_\Omega fv\, dx, \quad \forall  v  \in H^1_0(\Omega).\]
Consider the weak formulation associated with problem \eqref{PDEProblem}: find $u \in H_0^1(\Omega)$ that solves

\begin{equation}\label{WeakForm}
   \mathcal{A}(u;u,v)=F(v),  \quad \forall  v  \in H^1_0(\Omega).
\end{equation}
Provided  Assumption \ref{assum1} on the coefficient $\alpha$,   the existence and   uniqueness of the solution $u \in H^1_0(\Omega)$ of  problem \eqref{WeakForm} can be ensured; see,   for example \cite{ Douglas,Unique}.  The weak solution $u$ satisfies the following stability
\begin{equation*} 
    |u|_1 \leq C \|f\|_0,
\end{equation*} 
where $C$ is independent of the spatial variations of $\alpha$~\cite{chipot}, but depends on the constants of   assumption   \eqref{ellip}.  

Throughout our article, the following auxiliary linear problem will play an important role. For a fixed so-called linearization point $\phi\in H_0^1(\Omega)$, we seek $u_\phi \in H^1_0(\Omega)$ that solves
\begin{equation}\label{auxLinProb}
    \mathcal{A}(\phi;u_\phi,v)=F(v), \quad \forall  v \in H^1_0(\Omega).
\end{equation}
The bilinear form $\mathcal{A}(\phi; \cdot, \cdot)$  retains the ellipticity and boundedness conditions in $H^1_0(\Omega)$, i.e., there exists $\lambda>0 \;\text{and} \; \Lambda>0$ such that
\begin{align*}
    \lambda |w|_1^2 &\leq \mathcal{A}(\phi;w,w),  \quad \forall \phi,w \in H^1_0(\Omega), \\
    \mathcal{A}(\phi;v,w) &\leq \Lambda |v|_1|w|_1, \quad \forall \phi,v,w \in H^1_0(\Omega).
\end{align*}
We emphasize that both $\lambda \text{ and } \Lambda$ do not depend on the choice of $\phi.$ 
Therefore, the auxiliary solution $u_\phi \in H^1_0(\Omega)$ exists and satisfies the following stability property
\[|u_\phi|_1 \lesssim \|f\|_0.\]
The constant depends on the ellipticity and boundedness constants.

\subsection{Kačanov method and finite element discretization}  
  This subsection introduces the classical Kačanov scheme,  that can be viewed as a fixed point method which  will be used  as an iterative scheme to solve problem \eqref{WeakForm}. In this case, it updates a given approximation by solving a linear problem in each step. 
 
 For an initial point $u_0 \in H^1_0(\Omega)$, the traditional Kačanov scheme generates a sequence of solutions $\{u^n\}_{n\geq0}$ that approximates the solution of \eqref{PDEProblem}, defined iteratively by 
    \begin{equation*}\label{eq:tradtionalKacanove}
        -\nabla \cdot (\alpha(x,u^n)\nabla u^{n+1})=f \quad \text{in } \Omega, \quad n=0,1,2,\ldots.
    \end{equation*}
Such an iterative scheme was presented by Kačanov in the context of a variational model for plasticity problems in \cite{kacanove}.  
In weak formulation, this means that, given $u^0 \in H^1_0(\Omega)$, we  seek the solution $u^{n+1} \in H^1_0(\Omega)$ such that 
\begin{equation}\label{iterKaca}
    \mathcal{A}(u^{n};u^{n+1},v)=(f,v),\quad \forall  v \in H^1_0(\Omega) \quad n=0,1,2,\ldots.
\end{equation}

 To approximate the sequence  $\{u^n\}_{n\in \mathbb{N}}$ in practice, we need to additionally discretize the problem for numerical implementation. Consider a decomposition of the domain $\Omega$ into a partition $\mathcal{T}_H$ of simplices or quadrilaterals. Let $T$ denote the elements of $\mathcal{T}_H$,  and let $H_T$ denote the corresponding diameter, we define   \[H:=\max_{T \in \mathcal{T}_H}H_T.\] 
We assume that $\mathcal{T}_H$ is shape-regular and quasi-uniform. Let $V_H$ be the standard first-order conforming finite element subspace of $H^1_0(\Omega)$. It consists of piecewise polynomials of total degree at most 1, if $T$ is a simplex,  or of partial degree at most 1 in each variable, if $T$ is a quadrilateral.  Given $ u^n_H \in V_H$,  the Galerkin method seeks a discrete approximation  $u_H^{n+1} \in V_H$    
 such that it solves \begin{equation*}\label{disc_iterative}
    \mathcal{A}(u_H^{n};u_H^{n+1},v_H)=(f,v_H), \quad \forall  v_H \in V_H \quad n=0,1,2,\ldots.
\end{equation*}
This scheme outputs a sequence of discrete solution  $\{u_H^n\}_{n\geq 0}.$
However, in order to give an accurate discrete approximation,  the standard finite element  introduced above is required  to resolve the spatial fine-scale features  encoded in the coefficient $\alpha$ which leads to high-dimensional systems of linear equations. Therefore, in the following section,  we introduce an alternative computational multiscale method, which provides a macroscopic representation of the solution while  significantly reducing the computational cost. 
\section{Approximation based on orthogonal decomposition }\label{Section :Ortho}
 
In this section, 
we aim to incorporate the fine-scale information encoded in the coefficient $\alpha$ into the coarse scale basis of the space $V_H$. 
The construction of a multiscale space in the spirit of the LOD is based on the idea of forming the orthogonal complement of a fine-scale space $W$ (to be specified below), where orthogonality is understood with respect to an energy scalar product induced by the problem at hand. For nonlinear problems, however, such an idea does not ensure the linearity of the resulting space. As the construction of such a multiscale space is well understood in linear settings; see, for example \cite{LODFIRst},   we aim to circumvent the nonlinearity of the problem in an iterative fashion. Therefore, we first briefly recap the orthogonal decomposition explained above for our linear auxiliary problem \eqref{auxLinProb}.

\subsection{Orthogonal decomposition for the auxiliary problem}
 First, we specify the fine-scale space $W$, which mainly relies on a bounded and projective  interpolation operator \[I_H: H^1_0(\Omega)\xrightarrow{}V_H \text{ such that } I_H(v_H)=v_H \quad \forall v_H \in V_H.\]Further, we assume that $I_H$ fulfills the following (local) stability and approximation properties 
\begin{equation}\label{Interpolation_properties}
    |I_Hv|_{1,T}  + H^{-1}\|v-I_Hv\|_{0,T}  \leq C_{\operatorname{proj}} |v|_{1,N(T)} \quad \forall v \in  H_0^1(\Omega), T \in \mathcal T_H.
\end{equation}
  The subdomain $N(T)$ denotes the set of neighboring elements $T' \in \mathcal{T}_H$ of non-empty intersection with $T$. 
  Due to projective property of $I_H$, it induces the following stable decomposition \[H^1_0(\Omega)=V_H\oplus W \quad \text{with}\quad W:=\operatorname{Ker}I_H,\]
where $W$ is commonly referred  to as the fine-scale subspace. An example of projective operator satisfying \eqref{Interpolation_properties} is given by 
$I_H: H_0^1(\Omega)\xrightarrow{\Pi_H}S^1(\mathcal{T}_H)\xrightarrow{E_H}V_H.$
The operator $\Pi_H$ is defined to be the $L^2$ projection onto  $S^1( \mathcal{T}_H )$ (the space of possibly discontinuous finite element functions that are affine on each element), and $E_H$ is an averaging operator that maps $v_H \in S^1( \mathcal{T}_H )$ into $V_H$. Further examples of  projection operators satisfying  property \eqref{Interpolation_properties} can be found in \cite{Thecode}.

Now, for a given $\phi\in H_0^1(\Omega)$, we define the correction operator \[\mathcal{Q}_{\phi}:H^1_0(\Omega)\xrightarrow{} W,\]
for any $v \in H^1_0(\Omega)$, as a solution to the  following auxiliary linear  problem: 
\begin{equation}\label{CorectionProblem}
    \mathcal{A}(\phi; \mathcal{Q}_{\phi}v,w)=\mathcal{A}(\phi; v,w)\quad \forall w \in W.
\end{equation}
Note that, due to  Assumption \ref{assum1}, problem \eqref{CorectionProblem} is a linear elliptic problem and thus well-posed.  Solving problem   \eqref{CorectionProblem} for any $v \in V_H$  basically leads to the construction of a multiscale $(1-\mathcal{Q}_\phi)V_H$.  Note that the space $(1-\mathcal{Q}_\phi)V_H$ is exactly the orthogonal complement of $W$ w.r.t.\ the scalar product $\mathcal A(\phi; \cdot, \cdot)$.

Given $\phi \in H^1_0(\Omega)$, we now formulate the Galerkin method in the constructed multiscale space $(1-\mathcal{Q}_\phi)V_H$ such that it seeks  $u_{\phi,H} \in (1-\mathcal{Q}_\phi)V_H$  that solves
\begin{equation}\label{auxuliarymutliscaleproblem}
    \mathcal{A}(\phi; u_{\phi,H},v_H)=(f,v_H)\quad \forall v_H \in (1-\mathcal{Q}_{\phi})V_H.
\end{equation}
We emphasize that the linear problem \eqref{auxuliarymutliscaleproblem} will be solved in each iteration step  on the new constructed multiscale space with an updated $\phi$, see the next subsection. It is therefore essential that all constants in the following lemma, which is standard (see \cite{LODFIRst}), are independent of $\phi$.

\begin{lemma}
    Given $\phi \in H^1_0(\Omega)$, let $u_\phi$ be the solution to the auxiliary problem \eqref{auxLinProb}, and let $u_{ \phi,H}$
    be the corresponding multiscale solution of \eqref{auxuliarymutliscaleproblem}. Then the following estimate holds 
    \begin{equation}\label{linearerror}
        |u_\phi-u_{\phi,H}|_1\leq C_{\operatorname{proj}}\frac{ H}{\lambda} \|f\|_0.
    \end{equation}
\end{lemma}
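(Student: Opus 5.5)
The plan is the standard LOD argument: a Galerkin orthogonality observation identifies the error as a fine-scale function, and an energy estimate combined with the interpolation property \eqref{Interpolation_properties} then produces the factor $H$. The key observation is that the error $e:=u_\phi-u_{\phi,H}$ lies in $W$. Write $u_\phi = I_H u_\phi + (u_\phi - I_H u_\phi)$ with $I_H u_\phi\in V_H$ and $u_\phi - I_H u_\phi\in W$. For $v_H\in V_H$ and $w\in W$, the definition \eqref{CorectionProblem} of $\mathcal{Q}_\phi$ gives $\mathcal{A}(\phi;(1-\mathcal{Q}_\phi)v_H,w)=0$, so $(1-\mathcal{Q}_\phi)V_H$ is $\mathcal{A}(\phi;\cdot,\cdot)$-orthogonal to $W$. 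Dimension counting and the direct sum $H^1_0=V_H\oplus W$ then yield $H^1_0(\Omega)=(1-\mathcal{Q}_\phi)V_H\oplus W$.

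With this decomposition, I would show that $u_{\phi,H}=(1-\mathcal{Q}_\phi)I_H u_\phi$. Set $\tilde u:=(1-\mathcal{Q}_\phi)I_Hu_\phi$. Then $u_\phi-\tilde u = (u_\phi - I_Hu_\phi) + \mathcal{Q}_\phi I_H u_\phi\in W$. For any $v\in(1-\mathcal{Q}_\phi)V_H$, orthogonality gives $\mathcal{A}(\phi;\tilde u,v)=\mathcal{A}(\phi;u_\phi,v)=(f,v)$. Hence $\tilde u$ solves \eqref{auxuliarymutliscaleproblem}, and by uniqueness (coercivity) $\tilde u=u_{\phi,H}$. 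Consequently $e\in W$, that is, $I_He=0$.

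The energy estimate follows next. By coercivity and Galerkin orthogonality ($e$ is $\mathcal{A}(\phi;\cdot,\cdot)$-orthogonal to $(1-\mathcal{Q}_\phi)V_H$, in particular to $u_{\phi,H}$), we get
$\lambda|e|_1^2\le \mathcal{A}(\phi;e,e)=\mathcal{A}(\phi;u_\phi,e)=(f,e)=(f,e-I_He)$.
By Cauchy--Schwarz and \eqref{Interpolation_properties} summed over elements,
$(f,e-I_He)\le \|f\|_0\,\|e-I_He\|_0\le \|f\|_0\, C_{\operatorname{proj}} H\,|e|_1$.
Dividing by $|e|_1$ yields \eqref{linearerror}. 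All constants involve only $\lambda$ and $C_{\operatorname{proj}}$, so they are independent of $\phi$.

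The only delicate point is the summation over elements. Summing the local estimate over $T$ introduces the finite overlap constant of the patches $N(T)$, which depends only on shape regularity. This constant must either be absorbed into $C_{\operatorname{proj}}$ (interpreting \eqref{Interpolation_properties} globally) or tracked separately. I expect this bookkeeping, rather than any conceptual step, to be the main thing to state carefully.
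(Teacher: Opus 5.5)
Your proposal is correct and is essentially the paper's proof: you identify $u_{\phi,H}=(1-\mathcal{Q}_\phi)I_Hu_\phi$ so that the error lies in $W$, then combine coercivity, $\mathcal{A}(\phi;\cdot,\cdot)$-orthogonality, $(f,e)=(f,e-I_He)$ and the interpolation estimate. Your remark on the patch-overlap constant from summing \eqref{Interpolation_properties} is a fair point that the paper silently absorbs into $C_{\operatorname{proj}}$; the ``dimension counting'' aside is imprecise since $W$ is infinite-dimensional, but it is not needed, because your direct verification that $\tilde u=u_{\phi,H}$ does not use it.
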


\begin{proof} 
We include the proof for completeness and to highlight the independence of $\phi$.
    Observe that
    \[u_{\phi,H}=(1-\mathcal Q_\phi)I_Hu_{\phi}=(1-\mathcal Q_\phi)u_\phi.\]
    Note that  $u_\phi-u_{\phi,H}=\mathcal Q_\phi u_\phi \in W  $ and by definition  of $\mathcal{Q}_\phi$  \eqref{CorectionProblem},  
    we derive by the ellipticity assumption in \eqref{ellip}, and the interpolation estimate \eqref{Interpolation_properties} that  
        \begin{align*}  
            \lambda|u_\phi-u_{\phi,H}|_1^2 &\leq \mathcal{A}(\phi;u_{\phi}-u_{\phi,H},u_{\phi}-u_{\phi,H})=\mathcal{A}(\phi;u_{\phi},\mathcal Q_\phi u_{\phi}) \\
            &=(f,\mathcal Q_\phi u_{\phi})\\
            &\leq \|f\|_0\|\mathcal Q_\phi u_\phi\|_0=\|f\|_0\|\mathcal Q_\phi u_\phi-I_H\mathcal Q_\phi u_\phi\|_0\\
            &\leq C_{\operatorname{proj}}H\|f\|_0|\mathcal Q_\phi u_\phi|_1,
        \end{align*}
which finishes the proof.
\end{proof}

\subsection{Iterative LOD approximation}

We now introduce an iterative LOD approximation to solve the considered nonlinear problem \eqref{WeakForm} based on the Kačanov iterative method.  In our method, we emphasize that in each iteration, two linear problems are solved as follows: a construction of a multiscale space is first performed via solving the linear correction problems \eqref{CorectionProblem}, subsequently, the  auxiliary linear problem \eqref{auxuliarymutliscaleproblem} is solved with the constructed multiscale space.

Assume that $u^n_H$ is known, we first introduce the  corresponding correction operator $\mathcal{Q}_n:=\mathcal{Q}_{u^n_{H}}$ obtained via solving the linear problem \eqref{CorectionProblem} using $\phi=u^n_H$.  
Then,   we use Galerkin approximation to find the solution  $u^{n+1}_{H} \in (1-\mathcal{Q}_{n})V_H$ such that it solves 
\begin{equation*}
    \mathcal{A}(u^{n}_{H};u^{n+1}_{H},v_H)=F(v_H) \quad \forall v_H \in (1-\mathcal{Q}_{n})V_H.  
\end{equation*}
The  iterative scheme defines a sequence of LOD solutions $\{u^n_{H}\}_{n \geq 0}.$ Denote the initial   point by $\phi :=u^0_H \in H^1_0(\Omega)$,  we assume that $\phi$ is chosen such that it is a first-order approximation of $u^0$ (the initial   point of the iterative scheme \eqref{iterKaca}), i.e.,
\[|u^0-\phi|_1<C_0 H.\]
For instance, this is easily satisfied by choosing $\phi=u^0$. The iterative LOD approximation is summarized in the following algorithm \ref{algo: IterativeLOD}.

\begin{algorithm}[H]
\caption{Iterative (LOD) Approach}\label{algo: IterativeLOD}
Initialize \(  \phi=u^0_H \in H^1_0(\Omega)\)  

\Repeat{
convergence criterion is met
}{
  Construct the multiscale space \( (1-\mathcal{Q}_{\phi})V_{H} \) via solving the correction problem \eqref{CorectionProblem} for any $v\in V_H$
  \begin{equation*}
    \mathcal{A}(\phi; \mathcal{Q}_{\phi}v,w)=\mathcal{A}(\phi; v,w) \quad \forall w\in W.  
\end{equation*}  

Compute the solution $u^{n+1}_{H}$ to the linear problem:  Find $u^{n+1}_H  \in  (1-\mathcal{Q}_{\phi})V_H$ 
  \begin{equation*}
    \mathcal{A}(u^{n}_{H};u^{n+1}_{H},v_H)=(f,v_H) \quad \forall v_{H} \in (1-\mathcal{Q}_\phi)V_H.  
\end{equation*}

  Update : \( \phi \leftarrow u_{H}^{n+1} \)
}
\Return \(\{ u^n_{H}\}_{n\geq 0} \)
\end{algorithm}
 
Observe that $\phi$ and $\mathcal{Q}_{\phi}$ are updated in each iteration. Hence, we obtain a new multiscale space $(1-\mathcal{Q}_\phi)V_H$ in each iterative step. Therefore, the method remains computationally demanding. This and further steps to turn the proposed idea into a feasible and computationally efficient method will be presented in Section \ref{sec: adaptive}.   Before that, we show that the iterative LOD approximation in fact has appealing convergence properties in the next section.

\section{Error analysis  for the  iterative LOD}\label{Section: error1-2-3D}
In this section, we present an error analysis of the proposed iterative LOD method, focusing on its convergence properties. In this error analysis, the major challenge is to handle the nonlinear terms between different iterations. This requires certain additional assumptions on the regularity of the solution $u$ or $u_\phi$, which we first motivate and summarize.
\subsection{Regularity requirements } \label{RegularityRequirments}
 
Given the spatial regularity assumptions on the coefficient $\alpha \in L^\infty \text{ and } f \in L^2$, we cannot hope to obtain a solution $u$ of regularity beyond $ H^1_0(\Omega)$. Now, rather than imposing further smoothness assumptions on  $\alpha$, we allow and assume more regularity, especially higher integrability, of the right-hand side $f$. This still allows  for a broad range of coefficients,  including discontinuous ones.   Even for such rough coefficients, the additional assumptions on $f$ ensure higher regularity of the solution $u$ in the sense of improved integrability.

To motivate why this is essential for our error analysis, we consider the estimation of the $L^2$-norm \[\|(\alpha(x,u_{\phi,H})-\alpha(x,u_\phi))\nabla u_\phi\|_0,\] where $u_\phi$ denotes the solution of the linear problem \eqref{auxLinProb}, and $u_{\phi,H}$   denotes the corresponding  multiscale discrete approximation of problem \eqref{auxuliarymutliscaleproblem}.  Such terms will appear in our error analysis when considering the error between subsequent iterations. 
Upon applying H\"older’s inequality, we   seek a suitable choice of $p>2$, in particular in 2$\operatorname{D}$ and 3$\operatorname{D}$ settings,  in order to utilize the Lipschitz continuity of $\alpha$ \eqref{LipCon}. Subsequently, we  apply the Sobolev embedding of $H^1_0(\Omega)$ into $L^q(\Omega)$ (with constant $C_{q,\Omega}$) such that the following estimate holds
\begin{equation}\label{eq:assump_general}
 \begin{split}
    \|(\alpha(x,u_{\phi,H})-\alpha(x,u_\phi))\nabla u_\phi\|_{0} &\leq \|(\alpha(x,u_{\phi,H})-\alpha(x,u_\phi))\|_{L^q} \|\nabla u_\phi\|_{L^p},  \quad q:=\frac{2p}{p-2} \in (2, \infty) \\
    &\leq  {L_cC_{q,\Omega}|u_{\phi,H}-u_\phi|_1} \|\nabla u_\phi\|_{L^p}.
\end{split}   
\end{equation}
  Note that this Sobolev embedding  will limit the choices of $q$ and thereby $p$ as well. 

 It is  known via Lax-Milgram theory that the bilinear form \eqref{auxLinProb} induces a bounded linear solution operator from $H^{-1}(\Omega) \text{ into } H^1_0(\Omega)$  that admits a unique solution $u_\phi \in H^1_0(\Omega)$ for each $f \in  H^{-1}(\Omega)$. Now, we aim to investigate whether     this result can be extended into $p>2$. However, assuming  $\nabla u_\phi \in L^p \text{ for some } p>2$ is not straightforward. To ensure this, we require additional higher integrability assumption  on the right-hand side $f$, e.g., $f\in L^p(\Omega)$. We emphasize that such a condition is in fact satisfied in many applications and does not restrict the multiscale nature of the problem. 
 Prior  to further discussion, we recall the definition of negative Sobolev space $W^{-1,p}(\Omega)$. For $\Omega \subset \mathbb{R}^n, 1 \leq p\leq \infty$, we set 
  \[W^{-1,p}(\Omega) :=(W_0^{1,p^{\prime}})^{\prime},\; p^{\prime}=\frac{p}{p-1}.   \]
  The corresponding norm is defined by \[\|f\|_{W^{-1,p}(\Omega)}:=\sup_{v\in {W_0^{1,p^{\prime}}(\Omega)}, v \neq 0}\frac{\langle f,v\rangle}{\|v\|_{W^{1,p^{\prime}}(\Omega)}}.\]

The authors in \cite{Kenig} investigate the  Poisson problem $(\alpha=I)$  with Dirichlet boundary condition  and $f \in {W_0^{-1,p}(\Omega)}$. Then, for each bounded Lipchitz domain $\Omega$, they establish the existence of    
  an exponent $P$ with $P> 4, \text{ when } d=2$,  and $P> 3, \text{ when } d=3$, such    that   the Poisson problem  
admits a unique solution $u \in W^{1,p}(\Omega)$ that satisfies

\begin{equation}\label{condtionP}
    |u|_{W_0^{1,p}(\Omega)}:=\|\nabla u\|_{L_p}\leq C\|f\|_{W^{-1,P}(\Omega)},\;\;2\leq p\leq P. 
\end{equation}
The constant $C$ is independent of the choice of $f.$
In particular, for $p=P$,  the solution   satisfies the following stability
bound
\begin{equation}\label{constantCP}
    \|\nabla u\|_{L^P}\leq C_P \|f\|_{W^{-1,P}(\Omega)}.
\end{equation}
The constant $C_P$ depends on the domain $\Omega$ and the value of $P.$

The inequality $\eqref{condtionP}$   can be  also generalized to the  diffusion problem where  $\alpha \in L^\infty$ whenever $f \in W^{-1,p}(\Omega)$, but  further conditions on the contrast of the diffusion coefficient $\alpha$ might be required. For further detailed proofs for  general diffusion elliptic problems, we refer to  \cite{BrennerScott,Meyer}, where such an inequality is shown using a perturbation argument. We also refer to  \cite{LP,Kenig} for  further details and proofs.   The following proposition introduces the assumptions required  to ensure that $\eqref{condtionP}$ holds for general diffusion problems, see \cite{LP}.
\begin{prop} Let $f$ and $\Omega$ be such that for some $P>2$ and for a constant $C_P$, the solution $u \in H^1_0({\Omega})$ of the Poisson problem with Dirichlet boundary satisfies the stability estimate  $\eqref{constantCP}$ whenever $f \in W^{-1,P}(\Omega)$. If $\alpha \in  L^\infty$ satisfies the boundedness and ellipticity condition, then the solution $u_\phi \in H^1_0(\Omega)$ of the linear auxiliary problem \eqref{auxLinProb} 
    satisfies 
    \begin{equation}\label{ConditionPGen}
        \|\nabla u_\phi\|_{L^p}\leq c_p\|f\|_{W^{-1,P}(\Omega)}
    \end{equation}
    given that $2\leq p\leq P$ with the constant 
    \begin{equation}\label{theconstant}
        c_p:=\frac{C^{\eta(p)}_P}{\Lambda(1-C^{\eta(p)}_P(1-\frac{\lambda}{\Lambda}))}.
    \end{equation}
The term  ${\eta(p)}$ is defined in \cite{LP} as \[{\eta(p)} :=\frac{\frac{1}{2}-\frac{1}{p}}{\frac{1}{2}-\frac{1}{P}}\]
which can be seen as   a convex  function and has a minimum $({\eta(p)} =0)$ at $p=2$. 
\end{prop}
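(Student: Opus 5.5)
We plan to prove the estimate by a Meyers-type perturbation argument combined with Riesz--Thorin interpolation, as in \cite{Meyer,LP}. The idea is to rewrite the auxiliary problem \eqref{auxLinProb} as a fixed-point equation for the Laplacian. Set $a(x):=\alpha(x,\phi(x))$, which is a fixed $L^\infty$ coefficient with $\lambda\le a\le\Lambda$. Let $\Delta^{-1}:W^{-1,p}(\Omega)\to W^{1,p}_0(\Omega)$ denote the Dirichlet solution operator of the Poisson problem. Dividing the equation by $\Lambda$ and adding and subtracting the Laplacian, $u_\phi$ solves
\[
-\Delta u_\phi = \frac{1}{\Lambda}f - \nabla\cdot\Big(\big(1-\tfrac{a}{\Lambda}\big)\nabla u_\phi\Big),
\]
that is,
\[
\nabla u_\phi = T\nabla u_\phi + \tfrac{1}{\Lambda}\nabla(-\Delta)^{-1}f,
\qquad
T g := \nabla(-\Delta)^{-1}\nabla\cdot\Big(\big(1-\tfrac{a}{\Lambda}\big)g\Big).
\]
The key facts are $|1-a/\Lambda|\le 1-\lambda/\Lambda<1$ pointwise, and that the map $K:g\mapsto \nabla(-\Delta)^{-1}\nabla\cdot g$ is bounded on $L^p(\Omega)^d$.

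The first step is to bound $K$ on $L^2$ and on $L^P$. On $L^2$, testing the Poisson problem with its solution gives $\|Kg\|_{L^2}\le\|g\|_{L^2}$, so the norm is $1$. On $L^P$, the hypothesis \eqref{constantCP} together with $\|\nabla\cdot g\|_{W^{-1,P}}\le\|g\|_{L^P}$ gives $\|Kg\|_{L^P}\le C_P\|g\|_{L^P}$.

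The second step is Riesz--Thorin interpolation between $p=2$ and $p=P$. With $1/p=(1-\theta)/2+\theta/P$, we get $\theta=\eta(p)$, and therefore $\|K\|_{L^p\to L^p}\le C_P^{\eta(p)}$. It follows that
\[
\|T\|_{L^p\to L^p}\le C_P^{\eta(p)}\Big(1-\frac{\lambda}{\Lambda}\Big).
\]

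The third step applies a Neumann series or contraction argument. Whenever $C_P^{\eta(p)}(1-\lambda/\Lambda)<1$, the fixed-point equation has a unique solution in $L^p$, and it satisfies
\[
\|\nabla u_\phi\|_{L^p}\le \frac{1}{1-C_P^{\eta(p)}(1-\lambda/\Lambda)}\cdot\frac{1}{\Lambda}\,\|\nabla(-\Delta)^{-1}f\|_{L^p}.
\]
We then bound the last factor by $C_P^{\eta(p)}\|f\|_{W^{-1,p}}$, using the same interpolation applied to $(-\Delta)^{-1}$. Finally, $\|f\|_{W^{-1,p}}\lesssim\|f\|_{W^{-1,P}}$ on a bounded domain, since $p\le P$ gives the embedding $W_0^{1,p'}\supset W_0^{1,P'}$ with the appropriate direction. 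Together these yield $c_p$ as stated in \eqref{theconstant}.

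The remaining task is to identify the $L^p$ fixed point with the $H^1_0$ solution. Since $p\ge 2$ and $\Omega$ is bounded, $L^p\subset L^2$, and the $L^2$ fixed point is unique because there $T$ is a contraction with constant $1-\lambda/\Lambda$. The two fixed points therefore coincide with $\nabla u_\phi$.

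The step we expect to be the main obstacle is the interpolation. Riesz--Thorin requires a single linear operator acting on both endpoint spaces, and it must be applied to the gradient-valued operator $K$ (for instance componentwise or on vector-valued $L^p$). We also need to track that the constant $C_P$ for $f\mapsto\nabla u$ transfers to $K$ without loss. In addition, the statement implicitly assumes the contrast condition $C_P^{\eta(p)}(1-\lambda/\Lambda)<1$, which holds automatically for $p$ close to $2$. We would state this condition explicitly as the range of validity of \eqref{ConditionPGen}.
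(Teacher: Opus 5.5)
Your proposal is the Meyers perturbation argument combined with Riesz--Thorin interpolation. That is what the paper relies on: it gives no proof of its own and defers to \cite{Meyer,BrennerScott,LP}. Your argument reproduces the constant $c_p$ of \eqref{theconstant} under the contrast condition $C_P^{\eta(p)}(1-\lambda/\Lambda)<1$, which you rightly make explicit.

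Two details need tightening.

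\emph{The $f$-term.} Riesz--Thorin does not act on the $W^{-1,p}$ scale, so handle this term by writing $f=-\nabla\cdot G$ with $\|G\|_{L^p}=\|f\|_{W^{-1,p}}$. This representation comes from Hahn--Banach, provided the $W^{-1,p}$-norm is taken dual to $\|\nabla v\|_{L^{p'}}$. With the full $W^{1,p'}$-norm used in the paper, the constant $1/\lambda$ already fails at $p=2$. The representation gives
\[
\|\nabla(-\Delta)^{-1}f\|_{L^p}\le\|K\|_{L^p\to L^p}\|G\|_{L^p}\le C_P^{\eta(p)}\|f\|_{W^{-1,p}}.
\]

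\emph{The final embedding.} Your last step $\|f\|_{W^{-1,p}}\lesssim\|f\|_{W^{-1,P}}$ costs a factor $|\Omega|^{1/p-1/P}$. So the exact $c_p$ holds for the scale-invariant bound with $\|f\|_{W^{-1,p}}$ on the right, not with $\|f\|_{W^{-1,P}}$ as literally stated.
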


\paragraph{Regularity requirements in $\operatorname{ 1D \text{ and } 2D}$:}
\label{Rem:Cp_definition}
Recalling \eqref{eq:assump_general}, we return to the question which values of $q$ are allowed to have the Sobolev embedding of $H^1_0(\Omega)$ into $L^q(\Omega)$. In   2$\operatorname{D}$, any  $q<\infty$ is admissible according to \cite{embdeding}, which in turn means that we need $p>2$.
Therefore,  we require in $\operatorname{  2D}$ that  $f\in L^{2+\epsilon}$  to ensure that $u_\phi \in W^{1,2+\epsilon}$, with $\epsilon>0$. Hence, we obtain via \eqref{theconstant} 
\begin{equation}\label{c_epsilon}
    c_{2+\varepsilon} \approx 1/\lambda.
\end{equation}
Then, as a result to the above explanation,   the  solution satisfies $\nabla u_\phi \in L^{2+\epsilon}$, and the following estimate holds\[\| (\alpha(x,u_{\phi,H})-\alpha(x,u_\phi))\nabla u_\phi\|_{0}\leq\| \alpha(x,u_{\phi,H})-\alpha(x,u_\phi)\|_{L^q}\|\nabla u_\phi\|_{L^{2+\epsilon}}, \quad  q=\frac{2(2+\epsilon)}{\epsilon}.\]
Now, recalling \eqref{eq:assump_general}, we conclude that 
\begin{equation}\label{2d_assumptions}
    \|(\alpha(x,u_{\phi,H})-\alpha(x,u_\phi))\nabla u_\phi\|_{0} \leq L_c c_{2+\varepsilon}\| u_{\phi,H}-u_\phi\|_{L^q}\|f\|_{L^{2+\epsilon}}\leq L_c c_{2+\varepsilon} C_{q,\Omega}| u_{\phi,H}-u_\phi|_1\|f\|_{L^{2+\epsilon}}.
\end{equation}
Observe that $q\xrightarrow{}\infty  \text{ as } \epsilon \xrightarrow{}0$. Hence, $\epsilon$ must be chosen such that \eqref{c_epsilon} is satisfied and $q \ll \infty$, thereby ensuring that   $C_{q,\Omega}$ remains bounded, particularly in $\operatorname{2D}$. 
Note that in the one-dimensional case, no additional assumptions on the right-hand side are required. Since $H^1_0(\Omega)\hookrightarrow L^{\infty}(\Omega)$, it follows from \eqref{2d_assumptions} that  $ f \in L^2(\Omega)$ $(\epsilon=0)$ is sufficient, implying $ u_\phi \in H^1_0(\Omega)$.
\begin{remark}[$\operatorname{3D}$  regularity requirement]\label{rem: 3D assumptions}  
     In   $\operatorname{3D}$,  the  Sobolev embedding theorem gives $H^1_0(\Omega)\hookrightarrow L^q(\Omega) \; \forall q \in [2,6]$, which means that we need at least $p=3$ in \eqref{eq:assump_general}. From \cite{BrennerScott, Meyer}, we infer that condition   
\begin{equation}\label{lessone}
    C_P(1-\frac{\lambda}{\Lambda})<1  
\end{equation} 
in \eqref{theconstant} must be satisfied along with $f \in L^3(\Omega)$ to ensure that the weak solution $u_\phi \in W^{1,3}(\Omega).$ This assumption limits the applicability in $\operatorname{3D}$ to diffusion coefficients with small or moderate contrast. For this reason, we focus our following discussion more on the one- and two-dimensional cases.  In $3D$,  the upper bound constant of \eqref{ConditionPGen} is given as  \[c_p \approx \frac{C_P}{\Lambda(1-C_P(1-\frac{\lambda}{\Lambda}))}.\] 
Recalling \eqref{eq:assump_general}, we obtain the bound
\begin{equation*}\label{3Dinequality}
        \begin{split}
          \| (\alpha(x,u_{\phi,H})-\alpha(x,u_\phi))\nabla u_\phi\|_{0}&\leq    L_c\| u_{\phi,H}-u_\phi\|_{L^6}\|\nabla u_\phi\|_{L^3}\\
          &\leq L_cc_3C_{6,\Omega} |u_{\phi,H}-u_\phi|_{1}\|f\|_{L^3.} 
      \end{split}
  \end{equation*}
\end{remark}
 
 \subsection{Error analysis}\label{subsec: error analysis}
In this subsection, we analyze and estimate the error $|u^n-u^n_H|_1, \; n=0,1,2,\ldots $, where $u_H^n$  denotes the multiscale approximation solution obtained via the iterative LOD, see Algorithm \ref{algo: IterativeLOD}, and  $u^n$ is the weak solution of \eqref{iterKaca}.
 \begin{theorem}\label{pro:iterative convergence}
     
    Let  $\{u^n\}_{n \in \mathbb{N}}$ and $\{u_H^n\}_{n\in \mathbb{N}}$ be  the output sequence of weak iterative solutions and multiscale iterative solutions, respectively. Assume that $f \in L^{2+\epsilon}(\Omega) $ in $\operatorname{2D}$, with $\epsilon>0$. In $\operatorname{1D}$, we formally  set $\epsilon=0$. Accordingly,   let $\nu  $  be defined 
     as 
\begin{equation}\label{definition fo nu}
    \nu := 
\frac{L_c c_{2+\epsilon}C_{q,\Omega} \|f\|_{L^{2+\epsilon}}}{   \lambda }. 
\end{equation}
If $\nu <1$,  the output sequence of solutions  $\{u^n\}_{n \geq 0}$ of the Kačanov scheme introduced in \eqref{iterKaca} converges to the  weak solution $u$ of problem \eqref{WeakForm}. Moreover,   the following error estimate holds true for all $n \in \mathbb{N}$

\begin{equation}\label{errorbound}
    |u^n-u_{H}^n|_1\leq \nu^n |u_H^0-u^0|_1+ \frac{1}{1-\nu} \frac{C_{\operatorname{proj}}}{\lambda} H \|f\|_0.
\end{equation}

 \end{theorem}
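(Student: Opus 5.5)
The plan is to split the error at step $n+1$ by means of an intermediate function, namely the exact solution of the linear problem with the \emph{discrete} linearization point. Let $\tilde u^{n+1}\in H^1_0(\Omega)$ solve the auxiliary problem \eqref{auxLinProb} with $\phi=u^n_H$:
\[
\mathcal{A}(u^n_H;\tilde u^{n+1},v)=F(v)\quad \forall v\in H^1_0(\Omega).
\]
Since $u^{n+1}_H$ is exactly the multiscale Galerkin approximation \eqref{auxuliarymutliscaleproblem} of $\tilde u^{n+1}$ with $\phi=u^n_H$, the preceding lemma, estimate \eqref{linearerror}, gives $|\tilde u^{n+1}-u^{n+1}_H|_1\le C_{\operatorname{proj}}\frac{H}{\lambda}\|f\|_0$. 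The constant does not depend on $\phi$. By the triangle inequality it then remains to bound $|u^{n+1}-\tilde u^{n+1}|_1$.

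For this term I would subtract the two linear equations, $\mathcal{A}(u^n;u^{n+1},v)=F(v)$ and $\mathcal{A}(u^n_H;\tilde u^{n+1},v)=F(v)$. This gives
\[
\mathcal{A}(u^n_H;u^{n+1}-\tilde u^{n+1},v)=\big((\alpha(x,u^n_H)-\alpha(x,u^n))\nabla u^{n+1},\nabla v\big).
\]
Testing with $v=u^{n+1}-\tilde u^{n+1}$ and using ellipticity yields
\[
\lambda|u^{n+1}-\tilde u^{n+1}|_1\le\|(\alpha(x,u^n_H)-\alpha(x,u^n))\nabla u^{n+1}\|_0 .
\]
Here the regularity discussion enters. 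The function $u^{n+1}$ is the solution of an auxiliary linear problem with $\phi=u^n$, so the Proposition gives $\|\nabla u^{n+1}\|_{L^{2+\epsilon}}\le c_{2+\epsilon}\|f\|_{L^{2+\epsilon}}$. I would use this bound in place of the $W^{-1,P}$ norm, via $L^{2+\epsilon}\hookrightarrow W^{-1,2+\epsilon}$ with the constants absorbed as in \eqref{2d_assumptions}. Hölder's inequality, Lipschitz continuity \eqref{LipCon} and the Sobolev embedding, exactly as in \eqref{2d_assumptions}, then give
\[
|u^{n+1}-\tilde u^{n+1}|_1\le \frac{L_c c_{2+\epsilon}C_{q,\Omega}\|f\|_{L^{2+\epsilon}}}{\lambda}|u^n-u^n_H|_1=\nu|u^n-u^n_H|_1 .
\]
In 1D the same argument works with $L^\infty$ and $\epsilon=0$.

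Combining the two bounds gives the recursion $e_{n+1}\le \nu e_n+C_{\operatorname{proj}}\frac{H}{\lambda}\|f\|_0$ with $e_n:=|u^n-u^n_H|_1$. Unrolling it and summing the geometric series $\sum_{k<n}\nu^k\le (1-\nu)^{-1}$ gives \eqref{errorbound}.

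For convergence of the Kačanov sequence the same argument applies with both linearization points continuous. Subtracting the equations for $u^{n+1}$ and for $u$ (the latter being $\mathcal{A}(u;u,v)=F(v)$) gives $|u^{n+1}-u|_1\le\nu|u^n-u|_1$. Here $\nabla u$ is controlled because $u$ solves \eqref{auxLinProb} with $\phi=u$. Hence $|u^n-u|_1\le\nu^n|u^0-u|_1\to0$ when $\nu<1$.

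The main obstacle is the nonlinear term. I have to make sure the factor carrying the gradient is always an \emph{exact} linear solution, namely $u^{n+1}$ or $u$, whose $W^{1,2+\epsilon}$ bound is available from the Proposition. The discrete function $u^{n+1}_H$ or $\tilde u^{n+1}$ has no such bound. This is why the subtraction is arranged so that the coefficient difference multiplies $\nabla u^{n+1}$ and the left-hand form uses the discrete linearization point.
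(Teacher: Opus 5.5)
Your proof is correct and follows the paper's route: the same intermediate solution $\tilde u^{n+1}$ (the paper's $w^{n+1}$), the lemma \eqref{linearerror} for the LOD part, a one-step contraction by $\nu$ for the linearization part, and the geometric series; the only variation is that the paper tests with $\mathcal{A}(u^{n};\cdot,\cdot)$, so the coefficient difference multiplies $\nabla\tilde u^{n+1}$, while you test with $\mathcal{A}(u^n_H;\cdot,\cdot)$ and put it on $\nabla u^{n+1}$. Your closing justification is mistaken, though: $\tilde u^{n+1}$ is not discrete but the exact solution of \eqref{auxLinProb} with $\phi=u^n_H$, so by the $\phi$-independence of $c_{2+\epsilon}$ it also satisfies $\|\nabla\tilde u^{n+1}\|_{L^{2+\epsilon}}\le c_{2+\epsilon}\|f\|_{L^{2+\epsilon}}$ (the paper uses exactly this), and either arrangement works.
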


 \begin{proof}  We first prove the convergence of  the sequence $\{u^n\}_{n \geq 0}$, i.e.,  $u^n \xrightarrow{}u$. Via Assumption \ref{assum1} on Lipschitz continuity and the ellipticity of $\alpha$, and  by the estimate \eqref{2d_assumptions},   we obtain the following estimate  
 \begin{align*}
     \lambda |u-u^n|^2_1& \leq \mathcal{A}(u^{n-1};u^n-u,u^n-u)=\mathcal{A}(u^{n-1};u^n,u^n-u)-\mathcal{A}(u^{n-1};u,u^n-u)\\
     &=\mathcal{A}(u;u,u^n-u)-\mathcal{A}(u^{n-1};u,u^n-u)=( (\alpha(u^{n-1})- \alpha(u)) \nabla u, \nabla (u^n-u)) \\
     &\leq \| (\alpha(u^{n-1})- \alpha(u)) \nabla u\|_0\|\nabla (u^n-u)\|_0\\
     & \leq L_c c_{2+\epsilon}C_{q,\Omega} \|f\|_{L^{2+\epsilon}}  |u-u^{n-1}|_1 |u^n-u|_1. 
\end{align*}
Inductively,  this yields
\[|u-u^n|_1\leq \nu^n|u-u^0|_1.\]
 Hence, by the assumption that $\nu<1$, the sequence $u^n \xrightarrow{} u$.

 Now, we aim to bound  the error  associated  with the iterative LOD technique.   For a fixed $n \in \mathbb{N}$, we define $w^n \in H_0^1(\Omega)$  as the solution of the  auxiliary linear problem 
   \begin{equation*} \label{aux}
       \mathcal{A}(u_H^{n-1};w^n,v)=F(v)  \quad  \forall v \in H^1_0.
   \end{equation*}
Following the discussion in the previous Subsection \ref{RegularityRequirments}, we note that   $w^n$ satisfies the following stability \[\|\nabla w^n\|_{L^{2+\epsilon} }\leq c_{2+\epsilon}\|f\|_{L^{2+\epsilon} }.\]
We emphasize that the consonant $c_{2+\epsilon}$ does not depend on the choice of  $f$ or  the linearization point $u_H^{n-1}$.  Now, by the triangle inequality, we have that
\[|u^n-u^n_H|_1\leq |u^n-w^n|_1 +|w^n-u^n_H|_1.\]
First, we bound the error $w^n-u^n_H$ between the LOD and the weak solution of the  linearized problem at $u_H^{n-1}$. It follows by \eqref{linearerror} that

\begin{equation}\label{estimet1}
    |w^n-u^n_H|_1\leq \frac{C_{\operatorname{proj}}}{\lambda} H \|f\|_0.
\end{equation}
 Second, the given  ellipticity assumption \eqref{ellip}, Lipschitz continuity \eqref{LipCon},  and the estimate  (\ref{2d_assumptions}),  we obtain that
  \begin{equation}\label{replace}
        \begin{split}
          \lambda|w^n-u^n|_1^2&\leq \mathcal{A}(u^{n-1}; w^n-u^n, w^n-u^n)\\
          &\leq \mathcal{A}(u^{n-1}; w^n, w^n-u^n)-\underbrace{\mathcal{A}(u^{n-1};u^n, w^n-u^n)}_{=F(w^n-u^n)=\mathcal A(u_H^{n-1}; w^n, w^n-u^n)}\\
        & =(( \alpha(u^{n-1})- \alpha(u_H^{n-1})) \nabla w^n,\nabla (w^n-u^n)) \\
        &\leq  \|(\alpha(u^{n-1})-\alpha(u^{n-1}_H))\nabla w^n\|_{0}|w^n-u^n|_1 \;\\
        &\leq 
        L_c c_{2+\varepsilon} C_{q,\Omega}  \|f\|_{L^{2+\epsilon}} |u^{n-1}-u^{n-1}_H|_1 |w^n-u^n|_1.
      \end{split}
  \end{equation}
Combining both estimate \eqref{estimet1} and \eqref{replace}, and proceeding inductively using $\nu<1$ together with the geometric series argument for the second term, we obtain the following estimate     
\begin{align*}
    |u^n-u_H^n|_1& \leq|u^n-w^n|_1+|w^n-u_H^n|_1 \\
    & \leq \nu  |u^{n-1}_H-u^{n-1}|_1 +\frac{C_{\operatorname{proj}}}{\lambda}  H \|f\|_0\\
    & \leq \nu^n |u^0_H-u^0|_1+\frac{1}{1-\nu}\frac{C_{\operatorname{proj}}}{\lambda}  H \|f\|_0.\qedhere
\end{align*}

 \end{proof}

\begin{remark}\label{rem:3D} In the same way, the  results and the proof above extend to  a $\operatorname{3D} $ domain. However, it is necessary to assume that $f\in L^3(\Omega)$  and the bound \eqref{lessone} on the contrast. Consequently, under these assumptions, the contraction constant $\nu $ is redefined  as 
\begin{equation}\label{definition fo nu 3D}
    \nu := \frac{L_c c_3 C_{6,\Omega} \|f\|_{L^3}}{   \lambda }   \text{ in } \operatorname{3D},
\end{equation}
and assuming again $\nu<1$, the estimate \eqref{errorbound} holds for the $\operatorname{
3D}$ setting. The result follows directly by applying the same sequence of steps as established in the preceding proof for $\operatorname{1D \text{ and } 2D}.$
    
\end{remark}

\begin{remark}
    All the presented theory above can be extended from the regularity of the right-hand side $f \in L^p(\Omega)$ -- with the values of $p$ as specified above --  to the assumption $f \in W^{-1,p}(\Omega) \cap L^2(\Omega).$
\end{remark}

Although the iterative (LOD) exhibits linear convergence, the repeated construction of the multiscale space in each iteration  is computationally expensive. This  motivates an adaptive approach for the update of the multiscale space. Maintaining  the same theoretical assumptions and regularity requirements  as presented above, we next discuss the practical realization of the iterative LOD approach, including localization and the explained adaptivity.

\section{Adaptive Iterative LOD approximation}\label{sec: adaptive}
As already discussed at the end of Section \ref{Section :Ortho}, the proposed iterative multiscale scheme is not yet feasible in practice. First, the linear problems to compute the correction $\mathcal Q_{\phi}$ are global fine-scale problems, which are as costly as one iteration step for the original problem. Second, the multiscale space $(1-{\mathcal{Q}}_\phi)V_H$ is computed  anew in each iteration which means additional computational overhead. In this section, we address both challenges by a localization and adaptive strategy (over the iterations) for the corrector computations.
Finally, we show how these modifications affect the convergence and error estimation result.

 \subsection{Localization}\label{subsec: locaiztion}
 Since we have to solve a linear elliptic problem in each step of our iterative LOD algorithm, we follow  by now the standard localization procedure of the correction problem~\eqref{CorectionProblem} as presented in \cite{LODFIRst}. To be self-contained,  we briefly summarize the key ingredients to localize the auxiliary global problem \eqref{auxLinProb}. We emphasize that other more recent localization strategies  could be used as well; see, for example,  the SLOD method in \cite{SLOD}.    
 Let $N^k(T)$ denote the $k$-layer patch of neighboring elements  to $T$ that is inductively defined for $k\geq 1$ by
 \[N^k(T)=N(N^{k-1}(T)), \quad k\geq 2, \qquad \text{ and  }N^1(T)=N(T).\]
 The quasi-uniformity of $\mathcal{T}_H$ ensures that the number of elements that belong to $N^k(T)$ is bounded by a constant $C_\mathrm{ol}$ that only depends on $k$ in a polynomial manner, i.e., 
\begin{equation}\label{Uniform_regu}
\max_{T \in \mathcal{T}_H}|\{T' \in \mathcal{T}_H:T' \in N^k(T)\}| \leq C_\mathrm{ol}.
\end{equation}
Given $\phi \in H^1_0(\Omega)$, the local and truncated element-based corrector 
\[ \mathcal{Q}_{\phi,T}^k:V_H \xrightarrow{}W(N^{k}(T)):=\{ w \in W : \text{supp}(w)  \subseteq\overline{N^k(T)}\}, \] is defined as the solution of the following local correction problem 

\begin{equation}\label{Corr}
    \mathcal{A}_{N^k(T)}(\phi; \mathcal{Q}_{\phi,T}^kv_H, w)=\mathcal{A}_{T}(\phi;v_H, w), \quad \forall w \in W(N^{k}(T)).
\end{equation}
Here, $\mathcal{A}_{D}$  denotes the restriction of  the bilinear form $\mathcal{A}(\phi;\cdot,\cdot)$ to the subdomain $D\subset \Omega$. 
Now, we define the global truncated  correction operator as \[\mathcal{Q}_{\phi}^k:V_H \xrightarrow{} W, \quad \mathcal{Q}_{\phi}^k=\sum_{T \in \mathcal{T}_H } \mathcal{Q}_{\phi,T}^k.\] 
Note that the corrector problems~\eqref{Corr} are still posed on infinite dimensional subspaces $W(N^{k}(T))$, which need to be discretized. This is accomplished as usual by introducing a fine-scale mesh $\mathcal{T}_h$ of $\Omega$, such that $h\ll H$ resolves the multiscale features of $\alpha$. 

The following proposition demonstrates that the error between the corrector operator and its localized counterpart exhibits an exponential decay with respect to the oversampling parameter $k$, see \cite{LODFIRst}.
\begin{prop}\label{my_proposition}
   Let $\phi\in H^1_0(\Omega)$,  and let Assumptions~\eqref{ellip} be satisfied. Let $\mathcal{Q}_\phi$ be the linear corrector defined in~\eqref{CorectionProblem} and its localized version $\mathcal{Q}^k$ defined in~\eqref{Corr}. There exists a constant $0<\beta<1$ such that for any $v_H \in V_H$, it holds that
   \begin{equation}\label{eq: decayResult}
    |(\mathcal{Q}_\phi-\mathcal{Q}_\phi^k)v_H |_1\lesssim C_\mathrm{ol}^\frac{1}{2} \beta^k |v_H|_1,
   \end{equation}
   where $C_{\operatorname{ol}}$ is the constant in~\eqref{Uniform_regu}. Moreover,  we underline that the constants in the estimate do not depend on  the choice of $\phi$.
\end{prop}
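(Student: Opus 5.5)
We will follow the standard LOD localization argument of \cite{LODFIRst}, keeping track of the constants so that they visibly depend only on $\lambda$, $\Lambda$, $C_{\operatorname{proj}}$ and the mesh regularity, and never on $\phi$. The point is that, for fixed $\phi$, the form $\mathcal{A}(\phi;\cdot,\cdot)$ is a linear symmetric elliptic form with the $L^\infty$ coefficient $x\mapsto\alpha(x,\phi(x))$. By Assumption~\ref{assum1} this coefficient is bounded below by $\lambda$ and above by $\Lambda$ uniformly in $\phi$. Since the linear argument uses only these two bounds, the claim follows.

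\textbf{Step 1 (element-wise decay).} Fix $T$ and set $w_T:=\mathcal{Q}_{\phi,T}v_H\in W$, the global element corrector solving $\mathcal{A}(\phi;w_T,w)=\mathcal{A}_T(\phi;v_H,w)$ for all $w\in W$. We show $|w_T|_{1,\Omega\setminus N^k(T)}\lesssim\tilde\beta^{k}|w_T|_1$. Take a cutoff $\eta$ with $\eta=0$ on $N^{m-1}(T)$, $\eta=1$ outside $N^m(T)$ and $|\nabla\eta|\lesssim H^{-1}$. Test with $\tilde w=(1-I_H)(\eta w_T)\in W$, which vanishes on $T$, so the right-hand side is zero. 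Then $\lambda\,|w_T|^2_{1,\Omega\setminus N^{m}(T)}$ is bounded by commutator terms supported in the ring $N^{m+2}(T)\setminus N^{m-2}(T)$. These are controlled using \eqref{Interpolation_properties}, $I_Hw_T=0$ and $\|w_T\|_{0,T'}\lesssim H|w_T|_{1,N(T')}$. This gives
\[
|w_T|^2_{1,\Omega\setminus N^{m+2}(T)}\le C\tfrac{\Lambda}{\lambda}\,|w_T|^2_{1,N^{m+2}(T)\setminus N^{m-2}(T)},
\]
and iterating over rings yields $\tilde\beta=(C\Lambda/\lambda/(1+C\Lambda/\lambda))^{1/4}$ (or similar), independent of $\phi$.

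\textbf{Step 2 (local truncation error).} By Galerkin orthogonality on $W(N^k(T))$, $|(\mathcal{Q}_{\phi,T}-\mathcal{Q}^k_{\phi,T})v_H|_1$ is bounded, up to the factor $(\Lambda/\lambda)^{1/2}$, by the best approximation error in $W(N^k(T))$. We use the candidate $(1-I_H)((1-\eta)w_T)$ with a cutoff adapted to $N^{k-1}(T)$. Step 1 then bounds this by $\tilde\beta^{k}|w_T|_1$, and $|w_T|_1\le(\Lambda/\lambda)|v_H|_{1,T}$.

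\textbf{Step 3 (summation).} This is the main obstacle, since a naive triangle inequality loses a factor $H^{-d}$. Let $z=\sum_T z_T$ with $z_T:=(\mathcal{Q}_{\phi,T}-\mathcal{Q}^k_{\phi,T})v_H$. Write $\lambda|z|_1^2\le\sum_T\mathcal{A}(\phi;z_T,z)$. For each $T$, split $z=(1-I_H)(\eta_T z)+(1-I_H)((1-\eta_T)z)$, where $\eta_T$ vanishes on $N^{k+1}(T)$. The first part lies in $W$ with support away from $N^k(T)$. Hence it is $\mathcal{A}$-orthogonal to $z_T$: both the global and the local corrector equations have zero right-hand side there, and $\mathcal{Q}^k_{\phi,T}v_H$ is supported in $N^k(T)$. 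The second part is local, so $\mathcal{A}(\phi;z_T,z)\lesssim\Lambda|z_T|_1|z|_{1,N^{k+2}(T)}$. Cauchy--Schwarz together with the overlap bound \eqref{Uniform_regu} gives $|z|_1\lesssim C_{\operatorname{ol}}^{1/2}(\sum_T|z_T|_1^2)^{1/2}$. Inserting Step 2 and using $\sum_T|v_H|_{1,T}^2=|v_H|_1^2$ yields \eqref{eq: decayResult} with $\beta=\tilde\beta$. All constants involve only $\lambda$, $\Lambda$, $C_{\operatorname{proj}}$ and shape regularity, hence they are independent of $\phi$.
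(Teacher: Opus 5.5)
Your proposal is correct and is essentially the argument the paper relies on. The paper gives no proof of its own; it defers to the standard LOD localization result of \cite{LODFIRst} and only notes that the constants depend on $\lambda,\Lambda$ rather than on $\phi$. Your three steps (element-wise exponential decay via a cutoff test function, quasi-optimality of the truncated corrector in $W(N^k(T))$, and the cutoff-based summation giving the $C_{\operatorname{ol}}^{1/2}$ factor) reconstruct exactly that argument with the $\phi$-independence made explicit. The only slips are cosmetic constants: the naive triangle inequality loses $H^{-d/2}$, not $H^{-d}$, and the patch indices and the exponent in $\tilde\beta$ shift by fixed offsets, neither of which affects the result.
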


 \subsection{Adaptive strategy}\label{subsec:errorIndic}
Motivated by the technique presented in \cite{adaptive2,adaptive1} for perturbed elliptic PDE, and in \cite{MAir&barbara} for nonlinear Helmholtz equations, we aim to improve the efficiency of the iterative LOD by recomputing the correction operators only where and when it is necessary. We emphasize that the localization of the corrector computations accomplished above is a necessary requirement for the following adaptive strategy.
The key idea is to derive an error indicator -- following \cite{adaptive2,adaptive1} --  for the correction operators when the local corrector problems \eqref{Corr} are computed at two different linearization points $\psi$ and $\xi$. The proposed indicator provides a criterion at the local level for identifying where updating the correction operators is required between successive iterations.
 
Given $\psi, \xi \in H^1_0(\Omega)$, let $\mathcal{Q}^k_{\psi,T}$ and $\mathcal{Q}^k_{\xi,T}$ be the corresponding correction operators computed by solving the linear problem \eqref{Corr} at  $\psi, \xi$ respectively.  We aim to compute the error between  $\mathcal{Q}^k_{\psi,T}$ and $\mathcal{Q}^k_{\xi,T}$, assuming $\psi, \xi$, and  $\mathcal{Q}^k_{\psi,T}$ are available and $\mathcal{Q}^k_{\xi,T}$ is not. In the context of our method, at iteration $n$,  $\psi$ should be seen as the multiscale solution computed at iteration $n-2$, and the corresponding local corrector $\mathcal{Q}^k_{\psi,T}$ is computed at iteration step $n-1$, whereas $\xi$ corresponds to the solution of the previous iteration $n-1 $, and we need to determine whether the correction $\mathcal{Q}^k_{\xi,T}$ needs to be   recomputed at step $n$. 
 
\begin{lemma}    Assume that $\psi, \xi \in H^1_0(\Omega) $, and the local corrector $\mathcal{\mathcal{Q}}_{\psi,T}^k$ are available. Then, 
for all $v_H \in V_H$, the following bound holds 
\[|(\mathcal{\mathcal{Q}}_{\xi,T}^k-\mathcal{\mathcal{Q}}_{\psi,T}^k)v_H|_1 \leq  \lambda^{-1}  e_{T}(\xi,\psi)|v_H|_{1,T}.\]
where 
\begin{equation}\label{eq: errorindicator}
    e_{T}(\xi,\psi)^2:=\sum_{\substack{T' \in N^k(T) }} \|\alpha(\xi)-\alpha(\psi)\|^2_{L^\infty(T')} \cdot \max_{w_{|_T}: w\in V_H } \frac{\|(\chi_T \nabla w-\nabla \mathcal{\mathcal{Q}}_{\psi,T}^kw)\|^2_{0,T' }}{\|\nabla w\|^2_{0,T}}.
\end{equation}
Moreover, we have that
\[|(\mathcal{\mathcal{Q}}_{\xi}^{k}-{\mathcal{Q}}_{\psi}^{k})v_H|_1 \leq \lambda^{-1}  {C_{\operatorname{ol}}^{\frac 12}} \Bigl(\max_{T \in \mathcal{T}_H} e_{T}(\xi,\psi)\Bigr)\, |v_H|_1.\]
\end{lemma}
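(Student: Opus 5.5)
The plan is to subtract the two local corrector problems \eqref{Corr} posed on the same space $W(N^k(T))$ and test with the difference itself. Set $e:=(\mathcal{Q}_{\xi,T}^k-\mathcal{Q}_{\psi,T}^k)v_H\in W(N^k(T))$. By \eqref{Corr} at $\xi$ we have $\mathcal{A}_{N^k(T)}(\xi;\mathcal{Q}^k_{\xi,T}v_H,w)=\mathcal{A}_T(\xi;v_H,w)$ for all $w\in W(N^k(T))$. Hence, for such $w$,
\begin{align*}
\mathcal{A}_{N^k(T)}(\xi;e,w)&=\mathcal{A}_T(\xi;v_H,w)-\mathcal{A}_{N^k(T)}(\xi;\mathcal{Q}^k_{\psi,T}v_H,w)\\
&=\bigl((\alpha(\xi)-\alpha(\psi))(\chi_T\nabla v_H-\nabla\mathcal{Q}^k_{\psi,T}v_H),\nabla w\bigr)_{N^k(T)},
\end{align*}
where the second line uses \eqref{Corr} at $\psi$: the combination $\mathcal{A}_T(\psi;v_H,w)-\mathcal{A}_{N^k(T)}(\psi;\mathcal{Q}^k_{\psi,T}v_H,w)$ vanishes and can be subtracted. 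This identity is the main step. It uses only quantities that are available, namely $\psi$, $\xi$ and $\mathcal{Q}^k_{\psi,T}$.

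Next, choose $w=e$ and use ellipticity \eqref{ellip} on the left to get $\lambda|e|_{1}^2$; note that $e$ is supported in $\overline{N^k(T)}$, so the global and patch seminorms coincide. On the right, split the integral over the elements $T'\in N^k(T)$. On each $T'$, pull out $\|\alpha(\xi)-\alpha(\psi)\|_{L^\infty(T')}$ and apply Cauchy--Schwarz, then Cauchy--Schwarz for sums:
\[
\lambda|e|_1^2\le \Bigl(\sum_{T'}\|\alpha(\xi)-\alpha(\psi)\|_{L^\infty(T')}^2\,\|\chi_T\nabla v_H-\nabla\mathcal{Q}^k_{\psi,T}v_H\|_{0,T'}^2\Bigr)^{1/2}|e|_1 .
\]
Each local factor is bounded by the maximum in \eqref{eq: errorindicator} times $\|\nabla v_H\|_{0,T}^2$. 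This relies on the fact that the numerator depends on $v_H$ only through $v_H|_T$, because $\mathcal{Q}^k_{\psi,T}$ sees $v_H$ only via $\mathcal{A}_T$ (this also makes the maximum well defined on the finite-dimensional space $V_H|_T$). The maximum is taken separately for each $T'$ and thus dominates the choice $w=v_H$. This gives $|e|_1\le\lambda^{-1}e_T(\xi,\psi)|v_H|_{1,T}$.

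For the global statement, write $(\mathcal{Q}^k_\xi-\mathcal{Q}^k_\psi)v_H=\sum_T e_T$ with $e_T$ as above, supported in $N^k(T)$. Each point lies in at most $C_{\operatorname{ol}}$ patches by \eqref{Uniform_regu} and quasi-uniformity, so $|\sum_T e_T|_1^2\le C_{\operatorname{ol}}\sum_T|e_T|_1^2$. By the local bound this is at most $C_{\operatorname{ol}}\lambda^{-2}\max_T e_T(\xi,\psi)^2\sum_T|v_H|_{1,T}^2$, and $\sum_T|v_H|_{1,T}^2=|v_H|_1^2$; taking square roots finishes the proof. The only delicate point is this overlap argument: the finite-overlap constant must be the same $C_{\operatorname{ol}}$, using the symmetry that $x\in N^k(T)$ iff $T$ lies within $k$ layers of the element containing $x$. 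Otherwise the estimate is a direct energy argument.
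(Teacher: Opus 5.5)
Your proposal is correct and follows the paper's proof essentially step for step. You subtract the two local corrector problems to get the right-hand side $\bigl((\alpha(\xi)-\alpha(\psi))(\chi_T\nabla v_H-\nabla\mathcal{Q}^k_{\psi,T}v_H),\nabla e\bigr)_{N^k(T)}$, test with $e$ and use ellipticity, bound elementwise by $\|\alpha(\xi)-\alpha(\psi)\|_{L^\infty(T')}$ times the local maximum, and then sum the element correctors using the finite-overlap constant $C_{\operatorname{ol}}$; your explicit justification of the overlap count and of why the maximum depends only on $v_H|_T$ is simply a bit more detailed than the paper's.
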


\begin{proof}
 Let $v_H \in V_H  \text{ and define } e := (\mathcal{\mathcal{Q}}_{\xi,T}^k-\mathcal{\mathcal{Q}}_{\psi,T}^k)v_H$. By  the ellipticity assumption  \eqref{ellip}, and the orthogonality in  \eqref{Corr}, we obtain 
    \begin{align*}
       \lambda |e|_1^2 &\leq \mathcal{A}_{N^k(T)}(\xi;e,e)=  (\alpha(\xi) \nabla(\mathcal{\mathcal{Q}}_{\xi,T}^k-\mathcal{\mathcal{Q}}_{\psi,T}^k)v_H, \nabla e)_{N^k(T)}\\
        &=(\alpha(\xi) \nabla \mathcal{\mathcal{Q}}_{\xi,T}^k v_H, \nabla e)_{N^k(T)} + (\alpha(\psi) \nabla \mathcal{\mathcal{Q}}_{\psi,T}^kv_H, \nabla e)_{N^k(T)}-(\alpha(\psi) \nabla v_H, \nabla e)_{T}\\
        &\quad -(\alpha(\xi) \nabla \mathcal{\mathcal{Q}}_{\psi,T}^kv_H, \nabla e)_{N^k(T)}                    \\
        &\leq ((\alpha(\xi)-\alpha(\psi))\nabla v_H, \nabla e)_{T} + ((\alpha(\psi)-\alpha(\xi)) \nabla \mathcal{\mathcal{Q}}_{\psi,T}^kv_H, \nabla e)_{N^k(T)}\\
        & \leq  \|(\alpha(\xi)-\alpha(\psi))(\chi_T \nabla v_H -\nabla \mathcal{Q}^{k}_{\psi,T}v_H)\|_{0,N^k(T)} |e|_{1,N^k(T)}.
    \end{align*}
    We conclude that \[\lambda |e|_1 \leq   \|(\alpha(\xi)-\alpha(\psi))(\chi_T \nabla v_H -\nabla {\mathcal{Q}^{k}}_{\psi,T}v_H)\|_{0,N^k(T)}.\]
We continue to further bound the term on the right-hand side by observing that
\begin{align*}
    \lambda^2 |e|_1^2 &\leq \|(\alpha(\xi)-\alpha(\psi)) (\chi_T \nabla v_H-\nabla \mathcal{Q}^k_{\psi,T}v_H)||_{0,N^k(T)}^2 \\
     &\leq \sum_{\substack{T' \in   N^k(T)   }} \max_{w|_T: w\in V_H }\frac{\|(\alpha(\xi)-\alpha(\psi)) (\chi_T \nabla w-\nabla  \mathcal{\mathcal{Q}}_{\psi,T}^kw)\|^2_{0, T'}}{\|\nabla w\|_{0,T}^2}\|\nabla v_H\|^2_{0, T} \\
    & \leq  \sum_{\substack{T' \in   N^k(T)   }} \|\alpha(\xi)-\alpha(\psi)\|^2_{\infty, T'} \cdot \max_{w|_T: w\in V_H} \frac{\|(\chi_T \nabla w-\nabla \mathcal{\mathcal{Q}}_{\psi,T}^kw)\|^2_{0,T' }}{\|\nabla w\|^2_{0,T}}\|\nabla v_H\|^2_{0,T}\\
    &\leq e_{T}(\xi,\psi)^2 \|\nabla v_H\|^2_{0,T}.
\end{align*}
Based on the definition of the local correction operator, it can be represented as the sum of the element correctors that are supported in the oversampling patch $N^k(T)$. Hence, we conclude via \eqref{Uniform_regu} that the following estimate holds 

\begin{equation}\label{eq: correctordifferentpoints}
    \begin{split}
        |(\mathcal{Q}^k_{\xi}-\mathcal{Q}^k_{\psi})v_H|^2_1 &\leq C_{\operatorname{ol}}  \sum_{T \in {\mathcal{T}}_H} |(\mathcal{Q}^k_{\xi, T}-\mathcal{Q}^k_{\psi, T})v_H|_1^2 \\
        &\leq C_{\operatorname{ol}} \lambda^{-2}(\operatorname{max}_{T \in \mathcal{T}_H}e_{T}(\xi,\psi))^2\|\nabla v_H\|_0^2.
    \end{split}
\end{equation}
We refer to \cite{adaptive2, adaptive1, MAir&barbara} for a similar calculation and further details.
\end{proof} 
\begin{remark}
The   part  $\max_{w|_T: w\in V_H } \frac{\|(\chi_T \nabla w-\nabla \mathcal{\mathcal{Q}}_{\psi,T}^kw)\|^2_{0,T' }}{\|\nabla w\|^2_{0,T}}$ of the indicator  corresponds to the maximum  eigenvalue of a low dimensional eigenvalue problem. For further details and discussion of the implementation of the error indicator, we refer to \cite{adaptive2,adaptive1}.  
\end{remark}
Altogether, we propose the following adaptive iterative LOD algorithm \ref{AlgorithofAdaptivity} for approximating the solution of  problem \eqref{WeakForm},  which   utilizes the error indicator \eqref{eq: errorindicator} and generates a sequence of adaptive multiscale solutions. We emphasize that, at iteration  $n=0$,  all the correctors are initially computed using the initial point $u^0_H$. In subsequent iterations, in contrast to the previous Algorithm \ref{algo: IterativeLOD}, we do not recompute all the correctors. Instead, the error indicator is locally evaluated to identify the elements $T$, for which $e_T^n>\operatorname{Tol}$. These elements are then collected in $M_n$. The correctors $Q^k_{u_H^{n-1},T}$ associated with elements in the set $M_n$ are recomputed accordingly, whereas for all other elements the correctors from the previous iteration are re-used. Altogether, this forms the corrector $\tilde{Q}_{n-1}^k$ at iteration $n$. 
 
\begin{algorithm}[H]
\caption{Adaptive Iterative Localized Orthogonal Decomposition (LOD) Approach}
\label{AlgorithofAdaptivity}
\KwIn{\( u^0_H \in H^1_0(\Omega)\) (initial guess), $\operatorname{Tol}$ (for error indicator updates), $\operatorname{tol}$ (for the convergence of the iteration scheme),  and oversampling parameter $k$ }
\For{$n=0,1,2,\ldots, N_{\operatorname{max}}
$}{
    \eIf{$n = 0$}{
    $e_T^n\leftarrow\infty \;\; \forall T \in \mathcal{T}_H$
    
        Construct multiscale space \( (1-\mathcal{Q}^k_{u^0_H })V_{H} \) via solving the correction problem \label{Construct}
  \begin{equation*}
   \mathcal{A}_{N^k(T)}(u^0_H; \mathcal{\mathcal{Q}}_{u^0_H,T}^kv_H, w)=\mathcal{A}_{T}(u^0_H ;v_H, w), \quad\forall w \in W(N^{k}(T)).
\end{equation*}

  Compute the solution $u^1_{H}$ to the linearized problem: Find $u^1_H  \in  (1-\mathcal{Q}^k_{u^0_H} )V_H$  \label{compute}
  \begin{equation*}
   \mathcal{A}(u^0_H ;u^1_{H},v_H)=(f,v_H) \quad \forall v_H \in (1-\mathcal{Q}^k_{u^0_H} )V_H.
\end{equation*}
    }{
    \For{$ T \in \mathcal{T}_H$}{
       Compute the  error indicator  \label{compute error indicator}
         $e_T^n \leftarrow e_{k,T}({{u}_H^{n-1}}, {u}_H^{n-2}$)  \quad \text{( $\tilde{\mathcal{Q}}^k_{{{u}_H^{n-1}},T}$}  is not available)
         
        $M_n\leftarrow \{T \in \mathcal{T}_H : e_T^n >\operatorname{Tol}\}$
         }
        Recompute and update the correctors corresponding to the elements $ T \in M_n$ \label{recompute}
        
        Solve the Galerkin problem:  Find $\tilde{u}^n_{H,k} \in (1-\tilde{\mathcal{Q}}_{n-1}^k)V_H$ such that   \label{solve_problem}
        \[\mathcal{A}(\tilde{u}^{n-1}_{H,k};\tilde{u}^n_{H,k};\tilde{v}_{H,k})=(f,\tilde{v}_{H,k})\ \forall \tilde{v}_{H,k} \in (1-\tilde{\mathcal{Q}}_{n-1}^k)V_H.\]
    
}
if $|\tilde{u}^{n}_{H,k}-\tilde{u}_{H,k}^{n-1}|< \operatorname{tol}$, stop

}
\Return $\{\tilde{u}^n_{H,k}\}_{n \geq 0}$
\end{algorithm}

\subsection{Error analysis for the  adaptive iterative method}
 Here, we extend the a priori error analysis presented in Section \ref{Section: error1-2-3D} to investigate the error associated with the adaptive iterative LOD  technique. 
The result presented in this section is applicable for a bounded Lipschitz domain $\Omega \in \mathbb{R}^d, d \in \{1,2,3\}$. In particular, we utilize  Theorem \ref{pro:iterative convergence},  that presents the analysis for one- and two- dimensional cases,  together with Remark \ref{rem:3D},  that extends the analysis into three-dimensional setting in the previous Subsection \ref{subsec: error analysis}.

\begin{theorem}\label{th:maintheorem}
    Given $k \text{ and } \operatorname{Tol}$,  let $\{\tilde{u}^n_{H,k}\}_{n \in \mathbb{N}}$ denote the  sequence generated by Algorithm \ref{AlgorithofAdaptivity} with initial point $\tilde{u}^0$, and let $\{u^n\}_{n \geq 0}$ denote the sequence of solutions of the Kačanov scheme in \eqref{iterKaca} initialized  at $u^0$. In addition, we assume that $f \in L^{2+\epsilon}(\Omega)$, where $\epsilon>0$  in two-dimensional setting, while  $\epsilon=0$ in one dimensional case, whereas $f \in L^{3}(\Omega)$ together with inequality \eqref{lessone} in three dimensions.  Given the contraction constant  $\nu$   defined in \eqref{definition fo nu} for $\operatorname{1D}$ and $\operatorname{2D}$ domains,  and in \eqref{definition fo nu 3D} for a $\operatorname{3D}$ domain, then, if $\nu<1$, it holds that 
\[|\tilde{u}^n_{H,k} -u^n|_1 \lesssim \nu^n |\tilde{u}^0 -u^0|_1+ \frac{1}{1-\nu} (1+\frac{\Lambda}{\lambda})(C_{\operatorname{proj}} H  + \Lambda C_{\operatorname{ol}}^{\frac{1}{2}}( \beta^{k} + \operatorname{Tol}))\lambda^{-1} \|f\|_0.\]
  The constant hidden in $\lesssim$ is independent of the multiscale variations of $\alpha$, but depends on the constants in \eqref{ellip}.

\begin{proof} 
 
For a fixed $n \in \mathbb{N},$ we define the following auxiliary solution  $w  \in H^1_0(\Omega)$ that solves  
      \begin{equation} \label{aux_adaptive}
      \mathcal{A}(\tilde{u}_{H,k}^{n-1};w^n,v)=F(v)  \quad  \forall v \in H^1_0(\Omega).
   \end{equation}
By  the triangle inequality, we have   \[|\tilde{u}^n_{H,k} -u^n|_1 \leq |\tilde{u}^n_{H,k} -w^n |_1 +|w^n  -u^n|_1.\]
First,  we start by estimating the term $|w^n -u^n|_1$. Following the same steps used to prove \eqref{replace} for the iterative approach, and given \eqref{definition fo nu} for $\operatorname{1D}$ and $\operatorname{2D}$,  as well as  \eqref{definition fo nu 3D} for $\operatorname{3D}$    under the assumption $\nu<1$, we conclude that   \begin{equation}\label{eq:1stestimate}
    |w^n -u^n|_1\leq \nu |u^{n-1}-\tilde{u}_{H,k}^{n-1}|_1.
\end{equation}
Now, we estimate the first term $|\tilde{u}^n_{H,k} -w^n |_1$. Note that $\tilde{u}^n_{H,k}$ can be equivalently rewritten  as $\tilde{u}^n_{H,k}=(1-\tilde{\mathcal{Q}}^k_{n-1})I_H\tilde{u}^n_{H,k}$. Set \[z:=w^n-\tilde{u}^n_{H,k}, \text{ and } z_{H,k} :=(1-\tilde{\mathcal{Q}}^k_{n-1})I_Hz.\]    Observe that $z-z_{H,k}= w^n-(1-\tilde{\mathcal{Q}}^k_{n-1})I_Hw^n \in W$.  
   Consequently, by means of  the orthogonality defined in \eqref{CorectionProblem} at ${\tilde{u}^{n-1}_H}$,  the decay property in \eqref{eq: decayResult}, and the estimates \eqref{eq: correctordifferentpoints} and \eqref{aux_adaptive},  we obtain  
   \begin{equation}\label{eq:middlestep}
    \begin{split}
         \lambda| z-z_{H,k}|_1^2 &\leq\mathcal{A}(\tilde{u}^{n-1}_{H,k};w^n -(1-\tilde{\mathcal{Q}}^k_{n-1})I_Hw^n, z-z_{H,k}) \\
         &= (f,z-z_{H,k})-\mathcal{A}(\tilde{u}^{n-1}_{H,k};(1- \mathcal{Q}^k_{\tilde{u}^{n-1}_{H,k}})I_Hw^n, z-z_{H,k}) \\
         &\quad - \mathcal{A}(\tilde{u}^{n-1}_{H,k};( \mathcal{Q}^k_{\tilde{u}^{n-1}_{H,k}}-\tilde{\mathcal{Q}}^k_{n-1})I_Hw^n, z-z_{H,k}) \\
         & = (f,z-z_{H,k})-\mathcal{A}_{\operatorname{}}(\tilde{u}^{n-1}_{H,k};(\mathcal{Q}_{\tilde{u}^{n-1}_H}- \mathcal{Q}^k_{\tilde{u}^{n-1}_H})I_Hw^n, z-z_{H,k}) \\
         &\quad - \mathcal{A}(\tilde{u}^{n-1}_{H,k};(\mathcal{Q}^k_{\tilde{u}^{n-1}_H}-\tilde{\mathcal{Q}}^k_{n-1})I_Hw^n, z-z_{H,k}) \\
         &\lesssim C_{\operatorname{proj}} H\| f\|_0 |z-z_{H,k}|_1+ \Lambda(C_{\operatorname{ol}}^{\frac{1}{2}}\beta^k+ C_{\operatorname{ol}}^{\frac{1}{2}} \operatorname{Tol}) \|f\|_0 |z-z_{H,k}|_1.
    \end{split}
     \end{equation}
Now, we seek to bound $|z_{H,k}|_1$. Observe the following Galerkin orthogonality   \[\mathcal{A}(\tilde{u}^{n-1}_{H,k};z,z_{H,k})=0.\]
Therefore, via the assumption \eqref{ellip}, we conclude that 
  \begin{align*}
      \lambda |z_{H,k}|_1^2\leq \mathcal{A}(\tilde{u}^{n-1}_{H,k};z_{H,k},z_{H,k})& =\mathcal{A}(\tilde{u}^{n-1}_{H,k};z_{H,k}-z,z_{H,k})\\
      & \leq \Lambda |z-z_{H,k}|_1 |z_{H,k}|_1
  \end{align*}
 and, thereby, we obtain  \[|z_{H,k}|_1\leq \frac{\Lambda}{\lambda}|z_{H,k} -z|_1.\]
 Hence, using the above estimate and the estimate \eqref{eq:middlestep}, we conclude  that 
 \begin{equation}\label{eq:2ndestimate}
     \begin{split}
         |z|_1&=|w^n-\tilde{u}^n_{H,k}|_1 \leq |z-z_{H,k}|_1 +|z_{H,k}|_1\leq  (1+\frac{\Lambda}{\lambda})|z-z_{H,k}|_1 \\
         &\lesssim (1+\frac{\Lambda}{\lambda})\left(C_{\operatorname{proj}} H  + \Lambda(C_{\operatorname{ol}}^{\frac{1}{2}}\beta^k + C_{\operatorname{ol}}^{\frac{1}{2}} \operatorname{Tol}) \right) \lambda^{-1}\|f\|_0.
     \end{split}
 \end{equation}
Combining  the estimates \eqref{eq:1stestimate} and  \eqref{eq:2ndestimate}, we obtain that
\begin{align*}
  |\tilde{u}^n_{H,k} -u^n|_1 &\leq |w^n -u^n|_1+|\tilde{u}^n_{H,k} -w^n|_1   \\
   &\lesssim \nu |u^{n-1}-\tilde{u}^{n-1}_{H,k}|_1+ (1+\frac{\Lambda}{\lambda})(C_{\operatorname{proj}} H  + \Lambda C_{\operatorname{ol}}^{\frac{1}{2}}(\beta^{k} +  \operatorname{Tol}))\lambda^{-1} \|f\|_0.
\end{align*}
Inductively, and utilizing the geometric series,  this yields
\begin{align*}
   |\tilde{u}^n_{H,k} -u^n|_1
   & \lesssim\nu^n |u^{0}-\tilde{u}^{0}|_1+ \frac{1}{1-\nu}(1+\frac{\Lambda}{\lambda})(C_{\operatorname{proj}} H  + \Lambda C_{\operatorname{ol}}^{\frac{1}{2}}( \beta^{k} +   \operatorname{Tol}))\lambda^{-1} \|f\|_0. \qedhere
\end{align*}

\end{proof}
\end{theorem}

Up to this point, our method has been based on the Kačanov iterative method. However,    the Ka\v{ca}nov method is not the only iterative scheme that can be used for solving or model problem \eqref{WeakForm}. In the next section, we provide   some derivations for  an  iterative  LOD technique in the context of the Newton method.  
\section{Adaptive iterative LOD with the Newton method}\label{Sec:NEWTONAlgorithm}
  In this section, we want to outline how to adapt the adaptive LOD technique for solving \eqref{WeakForm} with the Newton method.  Similar to before, at each iteration, we solve an (auxiliary) linear problem, which differs from the one for the Ka\v{c}anov method. Hence, we explain how the correction operator and the error indicator for the adaptive update strategy need to be modified.

 \subsection{Correction operator with the Newton method} \label{subsec: CorOpe  Via Newton method }

In the Newton method, the linearized problem for each iteration is constructed using the Fréchet derivative.  Consider the following nonlinear operator $\mathcal F: H^1_0(\Omega)\to H^{-1}(\Omega)$, $u\mapsto \mathcal F(u)$ defined via  
\begin{equation*}\label{Newtoniterative}
    \langle \mathcal{F}(u), v\rangle := \mathcal{A}(u;u,v)-(f,v), \quad \forall  v \in H^1_0(\Omega).
\end{equation*}
The Fr\'echet derivative of $\mathcal F$ at  $\phi \in H^1_0(\Omega)$ in direction $v\in H^1_0(\Omega)$ is calculated as
   \begin{equation}\label{eq:CorrectionNEwton}
        \langle \mathcal{D}\mathcal{F}(\phi)v, \psi\rangle=(\alpha(x,\phi)\nabla v, \nabla \psi) + (v\alpha_s(x, \phi)\nabla \phi, \nabla \psi) \qquad \forall \psi\in H^1_0(\Omega).
   \end{equation}  
In one step of the Newton method, $u^{n+1}$ is computed as $u^{n+1}=u^n+\rho^n$, where $\rho^n$ solves
\[\langle \mathcal D\mathcal F(u^n)(\rho^n), v\rangle =-\langle \mathcal F(u^n), v\rangle \quad \forall v\in H^1_0(\Omega).\]
Hence, for the iterative LOD with the Newton method, following Section \ref{sec: adaptive}, the corrector problems for fixed $\phi\in H^1_0(\Omega)$ read as follows: For $v_H\in V_H$,  find $\mathcal Q_\phi v_H\in W$ as the solution of
\begin{equation}\label{eq:correcFrechet}
    \langle \mathcal D\mathcal F(\phi)(\mathcal Q_\phi v_H), w\rangle=\langle \mathcal D\mathcal F(\phi)(v_H), w\rangle \quad \forall w\in W.
\end{equation}
The localized version of the correction problem \eqref{eq:correcFrechet} as presented in Section \ref{subsec: locaiztion} reads as follows: Given $v_H\in V_H$, and any $ T \in \mathcal{T}_H $,  find $\mathcal Q_{\phi,T}^kv_H\in W(N^k(T))$ as the solution of

\begin{equation}\label{eq:correcFrechetLocal}
    \langle \mathcal D\mathcal F(\phi)(\mathcal Q^k_{\phi,T}v_H), w\rangle_{N^k(T)}=\langle \mathcal D\mathcal F(\phi)(v_H), w\rangle_T \quad \forall w\in W(N^{k}(T)).
\end{equation}
Recalling \eqref{eq:CorrectionNEwton}, we emphasize that the correction problem \eqref{eq:correcFrechet} is of convection-diffusion type so that its coercivity is not directly clear. Nevertheless,  it is  shown in \cite{khraisverfuerth2025} that, in fact, \eqref{eq:correcFrechet} is coercive over $W$ whenever $H$ is sufficiently small. As result,  problem \eqref{eq:correcFrechet}  and its localized versions \eqref{eq:correcFrechetLocal}  are indeed  well-posed.

Now, we introduce the auxiliary Newton linear multiscale problem.  Let $\phi \in H^1_0(\Omega)$, we define the following auxiliary discrete linear problem corresponding to \eqref{WeakForm}: Find the solution $u_{\phi,H} $ as
\begin{equation}\label{Newtonupadtes}
    u_{ \phi,H}:=\phi+\rho_{ \phi,H},
\end{equation}
where $\rho_{ \phi,H} \in (1-\mathcal{Q}^k_\phi)V_H$ solves the following linear problem:
\begin{equation}\label{Newtonlinearproblem}
    \langle \mathcal{D}\mathcal{F}(\phi)(\rho_{ \phi,H}),v\rangle= -\langle \mathcal{F}(\phi), v\rangle, \quad \forall v \in  (1-\mathcal{Q}^k_\phi)V_H.
\end{equation}
  
 \subsection{Error Indicator for the Newton method}

Similar to Subsection \ref{subsec:errorIndic}, we introduce an error indicator associated with the localized correction computation, as presented in \eqref{eq:correcFrechetLocal}. The definition of the error indicator is presented in the following Lemma \ref{errorIndicatorLemma}.  In particular,  the derivation of the error indicator is based on the problem formulation given in \eqref{eq:correcFrechet} and its local version \eqref{eq:correcFrechetLocal}.
\begin{lemma}\label{errorIndicatorLemma}   Assume that $\psi, \xi \in H^1_0(\Omega) $, and $\mathcal{\mathcal{Q}}_{\psi,T}^k$ are given, while   $\mathcal{\mathcal{Q}}_{\xi,T}^k$ is not available. Then, 
for all $v_H \in V_H$, the following bound holds
\[|(\mathcal{\mathcal{Q}}_{\xi,T}^k-\mathcal{\mathcal{Q}}_{\psi,T}^k)v_H|_1 \lesssim    e_{T}(\xi,\psi)|v_H|_{1,T}.\]
where
\begin{equation}\label{eq: erroindicatorNewton} 
    \begin{split}
    e^2_{T}(\xi,\psi)&:= \sum_{\substack{T' \in   N^k(T)  }}  \Bigl(\|(\alpha (\xi)-\alpha (\psi))\|^2_{\infty,  T'}\max_{w|_T: w\in V_H} \frac{\|\nabla\mathcal{Q}^k_{\psi,T}w-\chi_{T}\nabla w\|_{0,T'}^2}{\|\nabla w\|^2_{0,T}}\\
    &  \qquad\qquad\qquad+ \|\alpha_u(\xi)\nabla \xi-\alpha_u(\psi)\nabla \psi\|^2_{\infty,T'}\max_{w|_T: w\in V_H}\frac{\|(\mathcal{Q}^k_{\psi,T}-\chi_{T})w\|_{0,T'}^2}{\| w\|^2_{0,T}} \Bigr).  \\
    \end{split}
\end{equation}
Moreover, we have that
\[|(\mathcal{\mathcal{Q}}_{\xi}^{k}-{\mathcal{Q}}_{\psi}^{k})v_H|^2_1 \lesssim   C_{\operatorname{ol}}  (\max_{T \in \mathcal{T}_H} e_{T}(\xi,\psi))^2|v_H|_{1}^2.\]
Here, the constant depends on    Poincaré-Friedrich's constant and $\lambda$. 
\end{lemma}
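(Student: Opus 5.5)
We follow the proof of the Kačanov error indicator lemma, replacing the symmetric form $\mathcal A(\phi;\cdot,\cdot)$ by the local Fréchet form $b_\phi(v,w):=\langle \mathcal D\mathcal F(\phi)v,w\rangle$ from \eqref{eq:CorrectionNEwton}. Set $e:=(\mathcal Q^k_{\xi,T}-\mathcal Q^k_{\psi,T})v_H\in W(N^k(T))$.

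\textbf{Step 1: coercivity.} As recalled after \eqref{eq:correcFrechetLocal}, \cite{khraisverfuerth2025} shows that $b_\xi$ is coercive on $W$ for $H$ small enough. We use the local version on $W(N^k(T))$, i.e.\ $c\,|e|_1^2\le b_\xi(e,e)_{N^k(T)}$. The constant $c$ depends on $\lambda$ and on a Poincaré--Friedrichs-type bound $\|w\|_0\lesssim H|w|_1$ for $w\in W$, which follows from \eqref{Interpolation_properties} since $I_Hw=0$. This is where the stated dependence on the Poincaré--Friedrichs constant enters.

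\textbf{Step 2: error identity.} Expand $b_\xi(e,e)$ and insert the two local corrector equations, written for $\xi$ and for $\psi$ and tested with $w=e$. Adding and subtracting $b_\psi(\mathcal Q^k_{\psi,T}v_H,e)_{N^k(T)}=b_\psi(v_H,e)_T$ gives
\[
b_\xi(e,e)=(b_\xi-b_\psi)(v_H,e)_T-(b_\xi-b_\psi)(\mathcal Q^k_{\psi,T}v_H,e)_{N^k(T)}.
\]
The difference of forms splits into two parts:
\begin{itemize}
\item the diffusion part, $((\alpha(\xi)-\alpha(\psi))\nabla\cdot,\nabla e)$;
\item the convection part, $(\cdot\,(\alpha_u(\xi)\nabla\xi-\alpha_u(\psi)\nabla\psi),\nabla e)$.
\end{itemize}
The right-hand side therefore becomes
\[
\bigl((\alpha(\xi)-\alpha(\psi))(\chi_T\nabla v_H-\nabla\mathcal Q^k_{\psi,T}v_H),\nabla e\bigr)_{N^k(T)}
+\bigl((\alpha_u(\xi)\nabla\xi-\alpha_u(\psi)\nabla\psi)(\chi_Tv_H-\mathcal Q^k_{\psi,T}v_H),\nabla e\bigr)_{N^k(T)}.
\]

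\textbf{Step 3: elementwise bound.} Apply Cauchy--Schwarz, divide by $|e|_1$, and split both norms over $T'\in N^k(T)$. Pull out the $L^\infty(T')$ norms of the coefficient differences, then bound each ratio by its maximum over $w|_T$ with $w\in V_H$, exactly as in the Kačanov lemma. This yields
\[
|e|_1^2\lesssim \sum_{T'}\Bigl(\|\alpha(\xi)-\alpha(\psi)\|_{\infty,T'}^2\,(\cdots)\,\|\nabla v_H\|_{0,T}^2+\|\alpha_u(\xi)\nabla\xi-\alpha_u(\psi)\nabla\psi\|_{\infty,T'}^2\,(\cdots)\,\|v_H\|_{0,T}^2\Bigr).
\]

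\textbf{Step 4: the main obstacle.} The second term carries $\|v_H\|_{0,T}$ rather than $|v_H|_{1,T}$. It must be converted using a Poincaré/Friedrichs bound for $v_H$. One option is a local estimate $\|v_H\|_{0,T}\lesssim|v_H|_{1,T}$, possibly with $H$-factors absorbed into the constant; another is to work with the global $|v_H|_1$ at the final summation. We plan to state the local bound with $\|v_H\|_{0,T}$ and absorb the conversion into $\lesssim$, since the lemma only claims dependence on the Poincaré--Friedrichs constant. With that conversion, the bound matches $e_T(\xi,\psi)^2|v_H|^2_{1,T}$.

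\textbf{Step 5: globalization.} Sum over $T$ using the finite-overlap property \eqref{Uniform_regu}, as in \eqref{eq: correctordifferentpoints}, to get $|(\mathcal Q^k_\xi-\mathcal Q^k_\psi)v_H|_1^2\le C_{\rm ol}\sum_T|e_T|_1^2$. For the $L^2$ part, sum $\|v_H\|_{0,T}^2$ to $\|v_H\|_0^2$ and apply the global Friedrichs inequality $\|v_H\|_0\lesssim|v_H|_1$, which cleanly justifies the constant. This gives the final estimate with $\max_T e_T(\xi,\psi)$.
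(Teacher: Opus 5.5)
\paragraph{Where the proposal fails: Step 4.} Your Steps 1--3 and 5 follow the paper's appendix proof essentially line by line: local coercivity of $\langle\mathcal D\mathcal F(\xi)\cdot,\cdot\rangle$ on $W(N^k(T))$, the identity from the two corrector equations (your signs are correct, where the paper's first line has a harmless sign slip), extraction of $L^\infty(T')$ norms with Rayleigh quotients, and finite-overlap summation. The gap is Step 4. The local inequality $\|v_H\|_{0,T}\lesssim|v_H|_{1,T}$ is false, with or without powers of $H$. For an interior element $T$, summing the nodal basis functions at the vertices of $T$ gives $v_H\in V_H$ with $v_H|_T\equiv1$, so $|v_H|_{1,T}=0\neq\|v_H\|_{0,T}$.

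Moreover, no other argument can rescue the local claim with the seminorm, because that claim is false for the Newton correctors. For this $v_H$, the right-hand side of \eqref{eq:correcFrechetLocal} reduces to $w\mapsto(\alpha_u(\phi)\nabla\phi,\nabla w)_T$. Take $\psi=0$. Then $\mathcal Q^k_{0,T}v_H=0$. Also $e_T(\xi,0)<\infty$ for a discrete $\xi$: $\mathcal Q^k_{0,T}w$ depends on $\nabla w|_T$ only, so both quotients are bounded on the finite-dimensional space $V_H|_T$. However, $\mathcal Q^k_{\xi,T}v_H\neq0$ as soon as $w\mapsto(\alpha_u(\xi)\nabla\xi,\nabla w)_T$ does not vanish on $W(N^k(T))$, while $e_T(\xi,0)\,|v_H|_{1,T}=0$. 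The culprit is the zeroth-order coupling $v\,\alpha_u(\phi)\nabla\phi$, which has no counterpart in the Ka\v{c}anov lemma; this is exactly where the analogy breaks.

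\paragraph{What can be proved, and how to repair the proposal.} The argument does prove the local bound with the full norm, and this is also what the paper's proof actually ends with. The paper bounds both $|v_H|_{1,T}$ and $\|v_H\|_{0,T}$ by $\|v_H\|_{1,T}$ and obtains $|(\mathcal Q^k_{\xi,T}-\mathcal Q^k_{\psi,T})v_H|_1\lesssim e_T(\xi,\psi)\|v_H\|_{1,T}$. So the seminorm in the first claim of the statement is stronger than what is proved.

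The global claim then follows exactly as in your Step 5, your ``second option''. Sum over $T$ using \eqref{Uniform_regu} to get $\|v_H\|_1^2$, then apply the global Poincar\'e--Friedrichs inequality. In the paper this is the only place where that constant enters; the coercivity in Step 1 is simply quoted from \cite{khraisverfuerth2025} for small $H$. So drop the local conversion, state the local estimate with $\|v_H\|_{1,T}$, and keep Step 5.

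\paragraph{A related caveat in Step 3.} This caveat applies to the paper's proof as well. When $\alpha_u(\psi)\nabla\psi\neq0$ on $T$, $\mathcal Q^k_{\psi,T}w$ depends on $w|_T$ and not only on $\nabla w|_T$. Hence the first quotient in $e_T$, normalized by $\|\nabla w\|_{0,T}$, can be $+\infty$. The full-norm bound then holds trivially, whereas the seminorm version would involve $\infty\cdot0$.
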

 The  proof with the derivation of the error indicator is postponed to the appendix. Note that the term $\|\alpha_u(\xi)\nabla \xi-\alpha_u(\psi)\nabla \psi\|_{\infty,T'}$  in \eqref{eq: erroindicatorNewton} is well-defined in practice, as $\nabla \xi  \text{ and } \nabla \psi$ are, by construction of the adaptive iterative LOD, discrete (fine-scale) functions. Observe that  the error indicator definition is similar  to that introduced for the  Kačanov case,  up to an  additional term. 
 This  term is expected to increase the values of the error indicator and thereby to increase the number of updated basis functions/correctors.  From a computational perspective, the adaptive LOD based on the Newton method appears to be  computationally  more expensive than based on the Kačanov scheme, especially since the problems \eqref{eq:correcFrechetLocal} and \eqref{Newtonlinearproblem} for the Newton method are of convection-diffusion type.
 
 Summarizing, the adaptive iterative LOD algorithm with the Newton method can be implemented along the idea of Algorithm \ref{AlgorithofAdaptivity}, using, however, the local correction problem  \eqref{eq:correcFrechetLocal} in Lines \ref{Construct} and \ref{recompute}, the linear multiscale problem \eqref{Newtonlinearproblem} in Lines \ref{compute} and \ref{solve_problem}, the updates of the Newton problem  \eqref{Newtonupadtes},  as well as the error indicator definition in \eqref{eq: erroindicatorNewton} in Line \ref{compute error indicator}.   
 
 An error analysis of the adaptive iterative LOD based with the Newton method does not seem to be possible with the techniques presented in this work. As \eqref{Newtonlinearproblem} is a convection-diffusion problem, extra assumptions would be needed for the convergence and error analysis. For instance, additional assumptions on the convection term and  higher regularity of the solution are used in \cite{Abdull_Vilmar_Newton,Hybrid_High_Order_Method_Newton, reg3,  SrarPollock_Nweton}. However, higher regularity $u\in H^{1+s}(\Omega)$ is usually not possible for our general heterogeneous coefficients. 
 We therefore leave a theoretical analysis to future work and investigate this approach only numerically. In particular, we will compare the adaptive iterative LOD for Newton and Kačanov method.
\section{Numerical experiments }\label{sec: SectionExperiment}
 
\begin{figure} 
    \centering
    \includegraphics[width=0.47\textwidth]{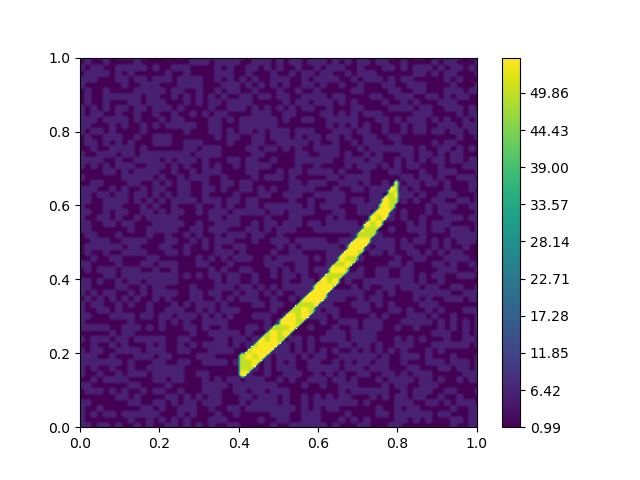}
    \caption{Spatial coefficient $c(x)$ }\label{variation}
\end{figure}

In this section, we present numerical examples to assess in detail the performance of the proposed adaptive iterative multiscale method and to validate the corresponding  theoretical findings. We consider a quasilinear problem  \eqref{WeakForm} with homogeneous Dirichlet boundary conditions and diffusion coefficients of the form 
\[\alpha(x,u)=c(x) \kappa(u), \] 
where $c$ involves fine-scale  features.    Precisely, $c$ is piecewise constant on a scale $\varepsilon=2^{-6}$ and exhibits a high-contrast channel, see Figure \ref{variation}. The considered  nonlinearities $\kappa$ are inspired by different models of the stationary Richards equation.  The domain of interest  $\Omega=[0,1]^2$ is discretized by a uniform quadrilateral mesh. We use the following coarse mesh sizes  $H=2^{-1},2^{-2},2^{-3},2^{-4},2^{-5},2^{-6}$ that do not resolve the fine-scale features of the solution $u$.  We emphasize that the analytical solution $u$ is not explicitly available. Therefore,  a standard FEM solution $u_h \in V_h$  is employed as a reference solution, where $V_h$ is the $\mathcal Q_1$ space over a fine uniform mesh of size $h=2^{-7}$, which resolves the fine-scale features of $c$.

To study  the convergence behavior of the  adaptive iterative LOD, let $\Tilde{u}^n_{H,k}$ be the multiscale solution obtained via Algorithm \ref{AlgorithofAdaptivity} at iteration $n$, and $u_h$ be the reference solution. The numerical study  presented here relies on the following relative upscaled error
\[e_{\operatorname{LOD}}:= \frac{|u_h-\Tilde{u}^n_{H,k}|_1}{|{u_h}|_1}.\]
 
Algorithm \ref{AlgorithofAdaptivity} runs till the threshold of $\operatorname{tol}=10^{-12}$ as a residual  is reached or a maximum number of 20 iterations  is attained.  The basis of the multiscale space is updated between successive iterations only if the corresponding  error indicator is larger than $\operatorname{Tol}=0.1$.  The oversampling parameter is chosen  to be fixed $k= 3$ in several experiments,  and  will clearly be mentioned, if it is changed.   The right-hand side employed in our experiment is given as 
\[
f(x) =
\begin{cases}
2^4, & \text{if } y \leq 0.35 \\
0.5,   & \text{if } \text{otherwise}. 
\end{cases}
\]
Based on Theorem \ref{th:maintheorem}, we expect $e_{\operatorname{LOD}}$ to converge linearly with respect to $H$.
Below, $\operatorname{LOD}_{\operatorname{ad}}$ refers  to the proposed adaptive iterative method in this article, and $\operatorname{LOD}_{\infty}$
refers  to the LOD method, in which one fixed multiscale space is computed once at the beginning with which then the global nonlinear problem is solved. This method has been introduced in \cite{Barbara} and formally corresponds to the adaptive iterative LOD with $\operatorname{Tol}=\infty$.   The source code for the numerical experiments is available at https://github.com/Maherkh/LodNonmonotoneNonlinearPDE.

\subsection{Van Genuchten Model}

 Here,  we apply  $\operatorname{LOD}_{\operatorname{ad}}$  to the so-called Van Genuchten model given by 
\[ {\kappa(u)}=\frac{(1+\vartheta |u|(1+(\vartheta |u|)^2)^{-\frac{1}{2}})^2}{1+(\vartheta|u|)^2},\quad \vartheta=0.005. \]
We point out that this model satisfies  Assumption \ref{assum1} imposed on the  diffusion coefficient $\alpha$.
\begin{figure} 
    \centering
    \begin{minipage}{0.45\textwidth}
       \centering      \includegraphics[width=1.1\textwidth]{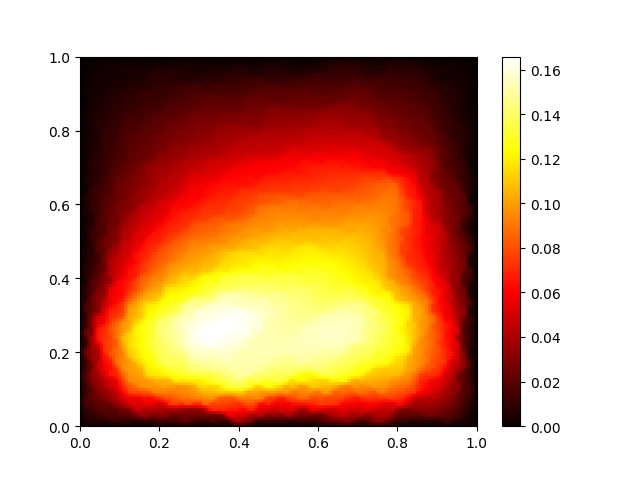} 
    
    \end{minipage}
    \hfill
    \begin{minipage}{0.45\textwidth}
         \centering  
    \includegraphics[width=1.1\textwidth]{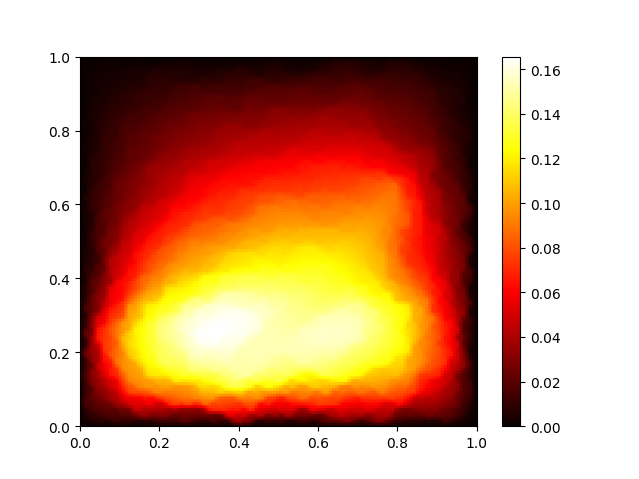}

    \end{minipage}
    \caption{Reference solution (left) and  adaptive iterative LOD solution (right).}\label{ref_lod_solution_VAN}  
\end{figure}
Figure \ref{ref_lod_solution_VAN} depicts the reference solution together with the multiscale solution obtained via the proposed $\operatorname{LOD}_{\operatorname{ad}}$ (Kačanov method) on a coarse mesh of size $H=2^{-4}$ and initial point $u^0_H=0$. We remark that the multiscale solution  captures the essential features of the reference solution despite the coarse mesh. The iterative method used in the computation terminates after 4 iterations, with a maximum of $55\%$ of the basis functions recomputed.

\begin{figure} 
    \centering
    \begin{minipage}{0.45\textwidth}
       \centering      \includegraphics[width=1.1\textwidth]{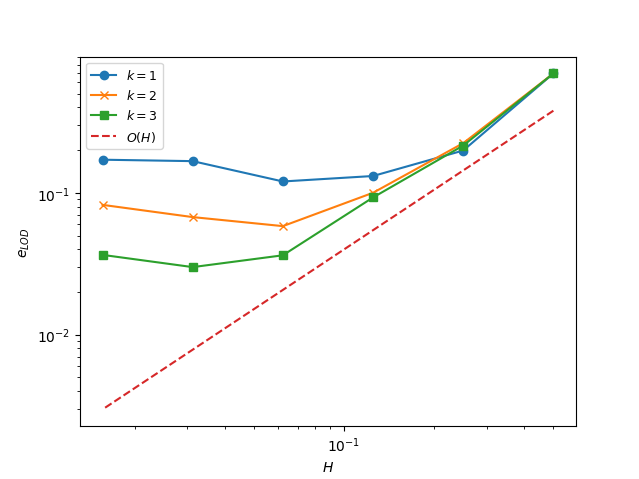}
    \end{minipage}
    \hfill
    \begin{minipage}{0.45\textwidth}
         \centering  
    \includegraphics[width=1.1\textwidth]{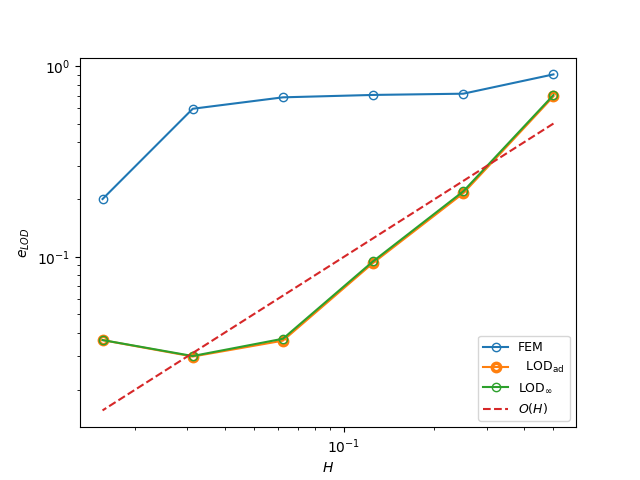}

    \end{minipage}
    \caption{Convergence history for different oversampling parameters (left) and comparison of the convergence history for different methods (right), (Van Genuchten model).}\label{localizatin and differnt method comparison}  
\end{figure}
To confirm our theoretical findings of Theorem \ref{th:maintheorem}, we consider $\operatorname{LOD}_{\operatorname{ad}}$ with initial point $u^0_H(x,y)=0.5x y (1 - x)  y  (1 - y)e^{(5(x+y))}$ for different $H$ and $k$ in Figure \ref{localizatin and differnt method comparison} (left).
 We observe the predicted linear convergence rate in the mesh size. Note that, for smaller $H$, the plateau in the  error curves reflects the dominance of  the localization error, as we   deduce from the results with increasing  the localization parameter $k$,  causing the plateau to   decrease  and the first-order convergence to continue further. 
 Moreover, we compare the proposed $\operatorname{LOD}_{\operatorname{ad}}$  with the standard FEM and $\operatorname{LOD}_{\infty}$ in Figure \ref{localizatin and differnt method comparison} (right). As expected,  the FEM error stagnates since the fine-scale features of $\alpha$ are not fully resolved. Observe that the error starts to decrease when $H$ becomes very small. In contrast,  both LOD methods show  the predicted first order convergence and substantially improved errors compared to the FEM. We emphasize that in this example,  both LOD techniques perform similarly. This behavior can be explained by the fact that    $\operatorname{LOD}_{\operatorname{ad}}$  requires a very small number of iterations (at most 5) and only few basis updates (one time  $83\%$ of the basis need to be recomputed) to approach the fixed point.

 Finally, we investigate the percentage of the basis updates given a fixed oversampling parameter. Note that all basis functions have to be computed in the zeroth iteration, so that the following numbers refer to re-computations from the first iteration onward. In general, we observe that the maximum percentage of basis updates decreases when the coarse mesh size $H$ gets smaller. However, this effect also highly depends on the initial point. For instance, for $u_H^0=0$,   the maximum  percentage of basis updates drops from $100\%$ for $H=2^{-1}$ to $15\%$ for $H=2^{-6}$ whereas for $u^0_H=  0.5x y (1 - x)  y  (1 - y)e^{(5(x+y))}$, it decreases from  $100\%$ for $H=2^{-1}$ to $83\%$ for $H=2^{-6}$. Further, the second starting point also needs slightly more iterations. 
 Hence, a ``bad'' choice of starting point may lead to more iterations and/or higher percentages for the  $\operatorname{LOD}_{\operatorname{ad}}$ as we will investigate in further detail for a more challenging model in the next experiment.

\subsection{Exponential model}

  In this section we study the exponential model given as 
\[ \kappa(u) =\exp(2u).\]
We remark that this model lies beyond the  considered setting of  Assumption \ref{assum1}. For consistency, we consider the same right-hand side as in the previous example. Unlike the previous example, we consider the $\operatorname{LOD}_{\operatorname{ad}}$ method not only using Kačanov technique in Algorithm \ref{AlgorithofAdaptivity}, but also using the Newton method introduced in Section \ref{Sec:NEWTONAlgorithm}.  

\begin{figure} 
    \centering
   \begin{subfigure}[b]{0.45\textwidth}
        \includegraphics[width=1.1\textwidth]{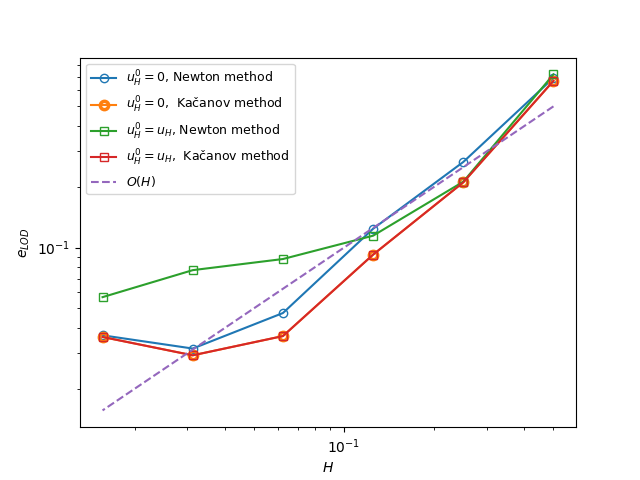}
         
    \end{subfigure}
    \hfill
     \begin{subfigure}[b]{0.45\textwidth}
        \includegraphics[width=1.1\textwidth]{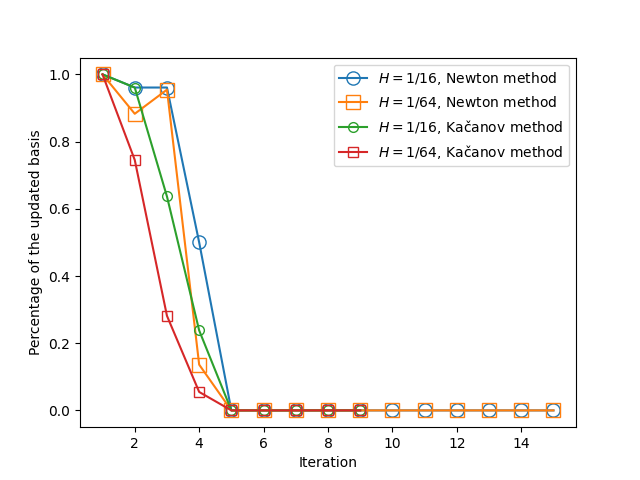}

    \end{subfigure}
    
    \caption{ Comparison of $\operatorname{LOD}_{\operatorname{ad}}$ (Kačanov) and  $\operatorname{LOD}_{\operatorname{ad}}$ (Newton)  for different initial points (left). Percentage of the updated basis in each iteration, with initial point $u^0_H=0$ (right), (exponential model).}
   \label{fig: percetage_of_the-udate}
\end{figure}

 In Figure \ref{fig: percetage_of_the-udate} (left), we examine $\operatorname{LOD}_{\operatorname{ad}}$ (Kačanov) and  $\operatorname{LOD}_{\operatorname{ad}}$ (Newton) for two different initial points, and fixed oversampling parameter.  We observe that both approaches  still exhibit first order convergence for this example. Further, the overall error behavior of both methods remains comparable, in particular for the initial point $u_H^0=0$. However, the error values for Kačanov method are less than in the case of the Newton method. In Figure \ref{fig: percetage_of_the-udate} (right), we test the performance of the methods regarding the number of basis updates for two different coarse mesh sizes and $u^0_H=0$. We observe that the percentage of the  recomputed basis decreases faster when $H$ becomes smaller for both methods.  However, for the same coarse mesh size,   $\operatorname{LOD}_{\operatorname{ad}}$(Newton) converges  more slowly  than $\operatorname{LOD}_{\operatorname{ad}}$ (Kačanov) as indicated by the higher number of iterations. Moreover, $\operatorname{LOD}_{\operatorname{ad}}$ (Newton) also needs slightly more basis updates than $\operatorname{LOD}_{\operatorname{ad}}$ (Kačanov). 
 %
 This behavior is theoretically expected, as the error indicator coincides with the error indicator of Kačanov method up to an additional term which makes it larger, and consequently causes more basis to be recomputed and  also  more iterations. In addition to the experiment in Figure \ref{fig: percetage_of_the-udate} (right), we tested both methods using another initial point $u^0_H=u_H$,  where $u_H$ is the finite element solution of \eqref{WeakForm} computed on the coarse mesh.  The number of iterations required for $\operatorname{LOD}_{\operatorname{ad}}$(Newton), for different coarse mesh sizes $H$ and for both initial points $u^0_H=0 \text{ or } u_H$, ranges between  6 and 16 iterations to converge. In contrast, the $\operatorname{LOD}_{\operatorname{ad}}$(Kačanov), requires   9 to 11 iterations. %

In Figure \ref{Fig:ourmethandAdaptiveiterati}, we compare ${\operatorname{LOD}}_{\operatorname{ad}}$ and ${\operatorname{LOD}}_\infty$ for different initial points and different iterative methods. As Figure \ref{Fig:ourmethandAdaptiveiterati} (left) illustrates, when the initial point $u_H^0(x,y)=0.5x y (1 - x)  y  (1 - y)e^{(5(x+y))}$ is chosen,   $\operatorname{LOD}_\infty$ (Kačanov) exhibits a loss of convergence; in contrast,    ${\operatorname{LOD}}_{\operatorname{ad}}$ recovers the first-order convergence behavior, as the method updates the multiscale space iteratively. In contrast, both ${\operatorname{LOD}}_\infty$ and  $\operatorname{LOD}_{\operatorname{ad}}$ (Newton) do not converge for this $u_H^0$ so that they are not shown in Figure \ref{Fig:ourmethandAdaptiveiterati} (right). The results  also indicate that the proposed ${\operatorname{LOD}}_{\operatorname{ad}}$ yields smaller errors for the Kačanov method, in particular when $u_H^0=0$. However,   the qualitative convergence behavior remains largely unaffected for both  Kačanov and Newton method when $u^0_H= 0 \text{ or } u_H$. This can likely be attributed to the proximity of the initial points to the solution $u$, which leads to a small linearization error and results in an accurate multiscale space constructed initially for ${\operatorname{LOD}}_\infty$.   
 
 \begin{figure} 
    \centering
    \begin{subfigure}[b]{0.45\textwidth}
        \includegraphics[width=1.1\textwidth]{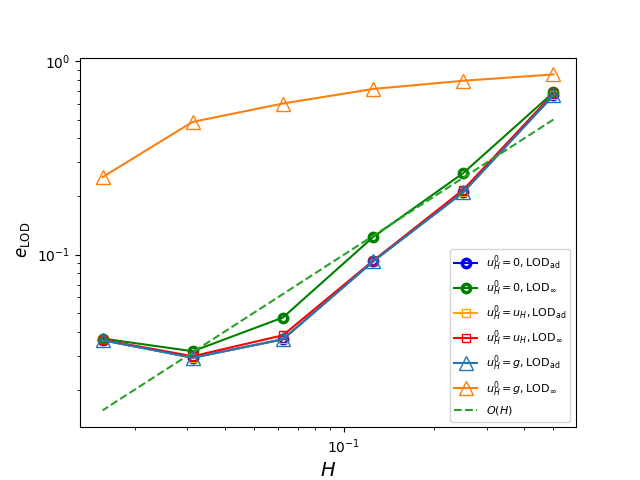}
    \end{subfigure}
    \hfill
    \begin{subfigure}[b]{0.45\textwidth}
        \includegraphics[width=1.1\textwidth]{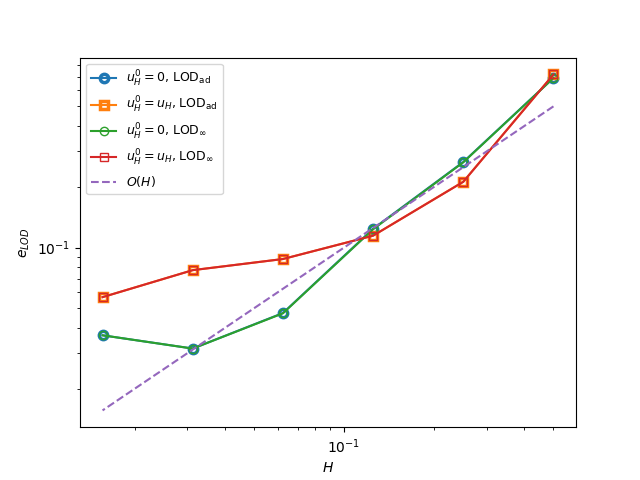}
    \end{subfigure}
    \caption{Comparison of two different LOD methods and different initial points, Kačanov method (left) and Newton method (right) (exponential model).}
    \label{Fig:ourmethandAdaptiveiterati}
    
\end{figure}

\subsection{ Examination of the smallness of the data assumption}

  In this section,  we further investigate the performance of the ${\operatorname{LOD}}_{\operatorname{ad}}$ (Kačanov) with respect to several choices of $f$  to quantify the influence of $f$ on the convergence behavior of the iterative method. The motivation for this investigation is that the right-hand side contributes to the contraction constant $\nu$ defined in \eqref{definition fo nu}  for $\operatorname{2D}$. 
  We test the method for three models: Van Genuchten Model, exponential model, and the so-called Haverkamp model, given as
  
  \[\kappa(u)=\frac{1}{1+\abs{u}}.\]
We consider a series of right-hand sides with increasing norm defined via \[
f(x) =
\begin{cases}
 
2^\gamma & \text{if } y \leq 0.15, \\
0.1 & \text{ otherwise}
\end{cases} \quad  \gamma =1, 2,4,6,8,10,12,14.
\] 
Additionally, the following parameters are fixed: the oversampling parameter $k=3$,  the coarse mesh size $ H=2^{-4}$, and   $\operatorname{Tol} =0.05.$
\begin{figure} 
    \centering
    \includegraphics[width=0.5\linewidth]{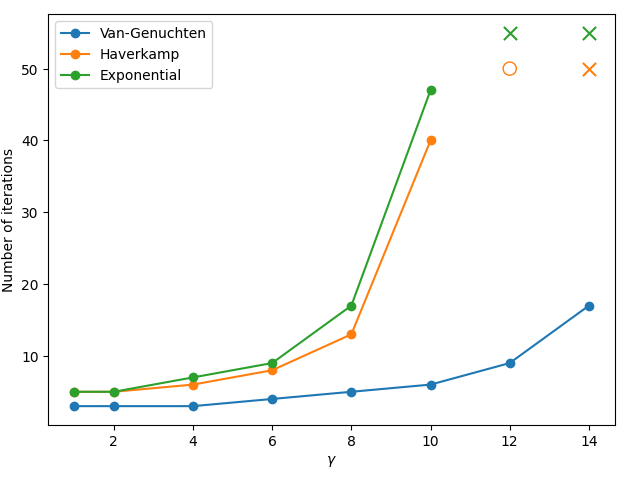}
    \caption{Increasing  numbering of iterations against  different  $f$. The notation  \textcolor{orange}{$\circ$} means the iterative method does not converge within the given maximum of iterations but might do after more iterations whereas the symbol $\times$ indicates that the iterative method is diverging. }
    \label{fig:several-right-hand-side}
\end{figure}
For the three examples, we observe an increasing  number of iterations as the values of $f$ get larger. For the Van Genuchten Model, the $\operatorname{LOD}_{\operatorname{ad}}$ (Kačanov) converges within the range of maximum 20 iterations for the different values of the right-hand side. In contrast, the exponential and the Haverkamp models have a threshold at  $\gamma=12$, after which the Haverkamp model shows only a very slow convergence (50 iterations do not suffice) or even diverging behavior. Similarly, the exponential model is not converging as $\gamma\geq 12$.  One possible explanation for this diverging behavior is that the smallness assumption $\nu <1$ is violated.  The results indicate that some condition on the size of $f$ might be required for convergence of the iteration, but the exact form for $\nu$ in \eqref{definition fo nu} is most probably not sharp.

 \section*{Conclusion}
In this work we proposed  and analyzed an adaptive  iterative numerical homogenization method for a class of nonmonotone quasilinear problems with rough coefficient. The method is theoretically and numerically investigated in the context of the Kačanov iterative scheme.  In addition, preliminary results are provided within the framework of the Newton method. In both schemes, a suitable error indicator has been derived  to adaptively update the multiscale space in each iteration. Furthermore, a first order convergence has been established for the Kačanov  method.  The error analysis also demonstrates the impact of localization, linearization,  and the adaptive update of the basis.  The introduction of additional  regularity assumption on the right-hand side 
is the key idea to  address the estimation of the nonlinearity term by  utilizing the Sobolev embedding theorem.  In 3$\operatorname{D}$, this requires an additional assumption on the contrast of the coefficient.  The  numerical experiments validate the theoretical analysis.  In addition,  we provided a numerical comparison for  both iterative schemes, in which we observed that both methods performs qualitatively the same. However, the Newton method is computationally more expensive and requires more basis updates and also more iterations. Further, we observed that the adaptive LOD  shows an improved error behavior compared to the  method previously presented  in \cite{Barbara}, in particular when  the initial point is far from the solution. Possible future research directions are the theoretical justification of the adaptive iterative LOD also for the Newton method and the extension to other types of nonlinear PDEs, for example with nonlinearity depending on the gradient or nonlinear elasticity.

\section*{Acknowledgments}
This work is funded by the Deutsche Forschungsgemeinschaft (DFG, German Research Foundation) under project number 496556642. BV also acknowledges support from the Deutsche Forschungsgemeinschaft (DFG, German Research Foundation) under Germany's Excellence Strategy – EXC-2047/2 – 390685813.


\bibliography{references}
\bibliographystyle{abbrv}  
 \appendix                  

\section{Proof of Lemma \ref{errorIndicatorLemma}}

\begin{proof}  
 Let $v_H \in V_H, \text{ and define } e:= (\mathcal{\mathcal{Q}}_{\xi,T}^k-\mathcal{\mathcal{Q}}_{\psi,T}^k)v_H$. To utilize the   ellipticity assumption in \eqref{ellip}  for problem \eqref{eq:correcFrechetLocal},  $H$ should be sufficiently small as discussed in Section \ref{subsec: CorOpe  Via Newton method }. Then,  by   definition of  Fréchet derivative \eqref{eq:CorrectionNEwton} and the local correction problem  in \eqref{eq:correcFrechetLocal}, we obtain 
 
    \begin{align*}
       \lambda |e|_1^2 &\leq \langle \mathcal D\mathcal F(\xi)(e), e\rangle_{N^k(T)}=\langle \mathcal D\mathcal F(\xi)({\mathcal {Q}}^k_{\psi,T}v_H), e\rangle_{N^k(T)}-\langle \mathcal D\mathcal F(\xi)({\mathcal {Q}}^k_{\xi,T}v_H), e\rangle_{N^k(T)}\\&=\langle \mathcal D\mathcal F(\xi)({\mathcal {Q}}^k_{\psi,T}v_H), e\rangle_{N^k(T)}-\langle \mathcal D\mathcal F(\xi)({\mathcal {Q}}^k_{\xi,T}v_H), e\rangle_{N^k(T)}-\langle \mathcal D\mathcal F(\psi)({\mathcal {Q}}^k_{\psi,T}v_H), e\rangle_{N^k(T)}\\
       &\qquad+\langle \mathcal D\mathcal F(\psi)(v_H), e\rangle_{T}\\
       &=\langle \mathcal D\mathcal F(\xi)({\mathcal {Q}}^k_{\psi,T}v_H), e\rangle_{N^k(T)}-\langle \mathcal D\mathcal F(\xi)(v_H), e\rangle_{T} -\langle \mathcal D\mathcal F(\psi)({\mathcal {Q}}^k_{\psi,T}v_H), e\rangle_{N^k(T)}\\
       &\qquad +\langle \mathcal D\mathcal F(\psi)(v_H), e\rangle_{T}\\
        &=\left((\alpha(\xi)-\alpha(\psi))(\nabla\mathcal{Q}^k_{\psi,T}v_H-\chi_{T}\nabla v_H)  + (\alpha_u(\xi)\nabla \xi-\alpha_u(\psi)\nabla \psi)(\mathcal{Q}^k_{\psi,T}v_H-\chi _Tv_H), \nabla e\right)_{N^k(T)} \\
        &\leq \Bigl(\|(\alpha (\xi)-\alpha (\psi))(\nabla\mathcal{Q}^k_{\psi,T}v_H-\chi_{T}\nabla v_H))\|_{0, N^k(T)}\\
        & \qquad+\|(\alpha_u(\xi)\nabla \xi-\alpha_u(\psi)\nabla \psi)(\mathcal{Q}^k_{\psi,T}-\chi_{T})v_H\|_{0,N^k(T)}\Bigr) \|\nabla e \|_0.
    \end{align*}
   Then it follows that   
\begin{align*}
    |e|^2 &\leq \lambda^{-2} \Bigl(\|(\alpha (\xi)-\alpha (\psi))(\nabla\mathcal{Q}^k_{\psi,T}v_H-\chi_{T}\nabla v_H)\|_{0, N^k(T)}\\
    &\qquad\qquad+\|(\alpha_u(\xi)\nabla \xi-\alpha_u(\psi)\nabla \psi)(\mathcal{Q}^k_{\psi,T}-\chi_{T})v_H\|_{0,N^k(T)}\Bigr)^2 \\
    &\leq 2\lambda^{-2} \Bigl(\|(\alpha (\xi)-\alpha (\psi))(\nabla\mathcal{Q}^k_{\psi,T}v_H-\chi_{T}\nabla v_H)\|^2_{0, N^k(T)}\\
    &\qquad\qquad+\|(\alpha_u(\xi)\nabla \xi-\alpha_u(\psi)\nabla \psi)(\mathcal{Q}^k_{\psi,T}-\chi_{T})v_H\|^2_{0,N^k(T)}\Bigr).
\end{align*}
Again, we can express the norms on $N^k(T)$ as a sum of element-wise norms of  elements $T' \in N^k(T)$. We conclude via applying H\"older’s inequality    as follows 

\begin{align*}
    |e|^2 & \lesssim    \max_{w|_T: w\in V_H} \frac{\|(\alpha (\xi)-\alpha (\psi))(\nabla\mathcal{Q}^k_{\psi,T}w-\chi_{T}\nabla w)\|_{0, N^k(T)}^2}{\|\nabla w\|^2_{0,T}} | v_H|^2_{1,T}\\
    &\qquad\qquad\qquad+\max_{w|_T: w\in V_H}\frac{\|(\alpha_u(\xi)\nabla \xi-\alpha_u(\psi)\nabla \psi)(\mathcal{Q}^k_{\psi,T}-\chi_{T})w\|_{0,N^k(T)}^2)}{\| w\|^2_{0,T}}\| v_H\|^2_{0,T}\\
    & \lesssim  \sum_{\substack{T' \in   N^k(T)  }}  \|\alpha (\xi)-\alpha (\psi)\|^2_{\infty,  T'}\max_{w|_T: w\in V_H} \frac{\|\nabla\mathcal{Q}^k_{\psi,T}w-\chi_{T}\nabla w\|_{0,T'}^2}{\|\nabla w\|^2_{0,T}} | v_H|^2_{1,T}\\
    &\qquad\qquad\qquad+\|\alpha_u(\xi)\nabla \xi-\alpha_u(\psi)\nabla \psi\|^2_{\infty,T'}\max_{w|_T: w\in V_H}\frac{\|(\mathcal{Q}^k_{\psi,T}-\chi_{T})w\|_{0,T'}^2}{\| w\|^2_{0,T}}\| v_H\|^2_{0,T}  \\
    &\lesssim   \sum_{\substack{T' \in   N^k(T)  }}  \Bigl(\|\alpha (\xi)-\alpha (\psi)\|^2_{\infty,  T'}\max_{w|_T: w\in V_H} \frac{\|\nabla\mathcal{Q}^k_{\psi,T}w-\chi_{T}\nabla w\|_{0,T'}^2}{\|\nabla w\|^2_{0,T}}\\
    &  \qquad\qquad\qquad+ \|\alpha_u(\xi)\nabla \xi-\alpha_u(\psi)\nabla \psi\|^2_{\infty,T'}\max_{w|_T: w\in V_H}\frac{\|(\mathcal{Q}^k_{\psi,T}-\chi_{T})w\|_{0,T'}^2}{\| w\|^2_{0,T}} \Bigr) \|v_H\|^2_{1,T}\\
    &\lesssim  e_{T}(\xi,\psi)^2 \|v_H\|^2_{1,T}.
\end{align*}

In the above estimate, the constant  absorbed in the notation $\lesssim$ depends only on $\lambda$. Similar to the Kačanov case, we obtain the following estimate 
\begin{align*}
    |(\mathcal{Q}^k_{\xi}-\mathcal{Q}^k_{\psi})v_H|^2 &\leq C_{\operatorname{ol}}  \sum_{T \in {\mathcal{T}}_H} |(\mathcal{Q}^k_{\xi, T}-\mathcal{Q}^k_{\psi, T})v_H|^2 \\
    &\lesssim   C_{\operatorname{ol}} \sum_{T \in {\mathcal{T}}_H}e_{T}(\xi,\psi)^2\| v_H\|_{1,T}^2. \\
    &\lesssim   C_{\operatorname{ol}} (\operatorname{max}_{T \in \mathcal{T}_H}e_{T}(\xi,\psi))^2\sum_{T \in {\mathcal{T}}_H}\| v_H\|_{1,T}^2 \\
    &\lesssim   C_{\operatorname{ol}} (\operatorname{max}_{T \in \mathcal{T}_H}e_{T}(\xi,\psi))^2 \| v_H\|_{1}^2 \\
    &\lesssim   C_{\operatorname{ol}} (\operatorname{max}_{T \in \mathcal{T}_H}e_{T}(\xi,\psi))^2|v_H|_1^2,
\end{align*}
where we used the Poincar\'e-Friedrichs inequality in the last step.
\end{proof}  
 
\end{document}